\providecommand{\AddToHook}[2]{}
\documentclass[10pt,onefignum,onetabnum,hidelinks]{siamart251216}

\usepackage{amssymb}
\usepackage{mathtools}
\usepackage{graphicx}
\usepackage{subcaption}
\usepackage{booktabs}
\usepackage{multirow}
\usepackage{siunitx}
\usepackage{comment}
\usepackage{algorithm}
\usepackage{algpseudocode}

\newsiamremark{remark}{Remark}

\headers{SAV with Pullback Corrections}{S. Zhang and J. Shen}

\title{SAV Schemes with Decomposition-Induced Pullback Corrections for Gradient Flows}

\author{Shiheng Zhang\thanks{Purdue University, West Lafayette, IN, USA (\email{shzhang3722@gmail.com}).}
\and Jie Shen\thanks{Eastern Institute of Technology, Ningbo, China
(\email{jshen@eitech.edu.cn}). Corresponding author. The work of this author is supported in part by NSFC grants 12531015 and 12371409.}}

\begin{document}
\maketitle

\begin{abstract}
The scalar auxiliary variable (SAV) method is a widely used framework for constructing linear, unconditionally energy-stable time discretizations of gradient flows.  In a first-order SAV step, eliminating the auxiliary variable shows that the state equation is a semi-implicit update augmented by a rank-one positive semidefinite correction generated by the previous nonlinear force.  The multiple-SAV (MSAV) method produces the corresponding correction componentwise and therefore yields a positive semidefinite correction whose rank is bounded by the number of energy components.  This algebraic viewpoint separates two mechanisms that are usually coupled in MSAV: the number of scalar variables used to track the nonlinear energy and the rank of the correction applied to the state equation.  We introduce a pullback-corrected SAV (PB-SAV) family that keeps a single scalar auxiliary variable for the total nonlinear energy but replaces the SAV rank-one correction by the pullback correction induced by an admissible component decomposition.  The correction remains positive semidefinite, has rank at most the number of components, and may be changed from one time step to the next without changing the scalar auxiliary variable.  We prove modified-energy dissipation laws for fixed and step-dependent decompositions, establish first-order convergence after a fixed spatial discretization under standard smoothness and positivity assumptions, derive a refinement identity in which the gain from refining a decomposition is an explicit weighted variance, and give a Sherman--Morrison--Woodbury implementation for the resulting low-rank perturbation of the standard semi-implicit solve.  We also show, in finite dimensions, that the pullback correction is the Gauss--Newton matrix of a least-squares representation of the nonlinear energy.  Numerical experiments for finite-dimensional gradient flows, Allen--Cahn dynamics, a finite-rank nonlocal gradient flow, and a linear nonlocal Cahn--Hilliard model illustrate regimes in which PB-SAV and SAV share the same first-order behavior and differ only in the error constant, and regimes in which PB-SAV reduces the trajectory error by a large factor.
\end{abstract}

\begin{keywords}
gradient flows; SAV;  energy stability; pullback correction
\end{keywords}

\begin{MSCcodes}
65M12, 65K10, 35K55
\end{MSCcodes}

\section{Introduction}
\label{sec:introduction}

Gradient flows are a basic mathematical model for dissipative evolution.  They appear in phase-field models for interfacial motion, nonlocal and local materials models, fluid and complex-fluid systems, image processing, and optimization.  In all of these settings the solution is driven by the variational derivative of an energy, and the decrease of that energy is a structural property of the continuous problem.  A time discretization that loses this structure can generate artificial energy growth, distort metastable dynamics, or require a step-size restriction that is not intrinsic to the underlying gradient flow.  This is why the construction of schemes with a discrete energy law has become a central issue in the numerical analysis of gradient-flow problems.

We consider gradient flows in the form
\begin{equation}\label{eq:intro-gf}
  \partial_t \phi = \mathcal G \frac{\delta E}{\delta \phi},
\end{equation}
where $E[\phi]$ is the free energy and $\mathcal G$ is a self-adjoint nonpositive operator.  The continuous energy identity is
\begin{equation*}
  \frac{\mathrm d}{\mathrm dt}E[\phi(t)]
  = \left(\frac{\delta E}{\delta \phi},\mathcal G\frac{\delta E}{\delta \phi}\right) \le 0.
\end{equation*}
For computation one commonly writes
\begin{equation}\label{eq:intro-splitting}
  E[\phi]=\frac12(\phi,\mathcal L\phi)+E_1[\phi],
\end{equation}
where the self-adjoint positive semidefinite operator $\mathcal L$ is treated implicitly and the nonlinear part $E_1$ is treated explicitly or semi-explicitly.  The challenge is to combine three properties that often pull in different directions: unconditional energy stability, good trajectory accuracy, and a linear solve whose dominant part is independent of the time step number.

Many approaches have been developed to preserve energy dissipation at the discrete level; we refer to \cite{MR4378430,shen2019new} for representative reviews.  The scalar auxiliary variable (SAV) method \cite{shen2018scalar,shen2019new} is especially attractive because it converts a broad class of nonlinear gradient flows into linearly implicit schemes that dissipate a modified energy.  The method introduces
\[
    q(t):=\sqrt{E_1[\phi(t)]+C},
\]
with $C$ chosen so that $E_1[\phi]+C>0$, and evolves the pair $(\phi,q)$ in a way that is equivalent to the original gradient flow when $q(0)=\sqrt{E_1[\phi(0)]+C}$.  First-order and higher-order SAV schemes are linear, unconditionally stable with respect to a modified energy, and can often be implemented using constant-coefficient solvers.  Convergence and error estimates for SAV schemes were developed in \cite{shen2018convergence}.  The multiple scalar auxiliary variable (MSAV) method \cite{cheng2018multiple} extends this idea by introducing one scalar variable for each term in a decomposition $E_1=\sum_\alpha E_{1,\alpha}$, which is useful when an energy contains components of different scales or physical origins.

The SAV framework has also led to a large literature of variants and extensions.  These include auxiliary-variable variants \cite{hou2019variant,huang2020highly}, exponential SAV and SAV-exponential-integrator schemes \cite{liu2020exponential,ju2022generalized}, arbitrarily high-order constructions \cite{gong2019arbitrarily}, Lagrange multiplier and generalized SAV formulations \cite{cheng2020new,cheng2021generalized}, and general linear energy-stable frameworks \cite{tan2022general}.  Another line of work, including relaxed and energy-optimized SAV methods \cite{jiang2022improving,zhang2022generalized,huang2024computationally,liu2024novel,zhang2025relaxed}, addresses the consistency between the numerical auxiliary variable $q^n$ and its defining value $\sqrt{E_1[\phi^n]+C}$.  The present paper addresses a different question: how much of the componentwise correction produced by MSAV can be obtained while retaining only one scalar auxiliary variable?

The motivation for the present work is easiest to see after the auxiliary variables have been eliminated.  In the first-order SAV scheme \eqref{eq:s2-sav-discrete-a}--\eqref{eq:s2-sav-discrete-b}, the state variable $\phi^{n+1}$ does not simply satisfy the usual semi-implicit update; it is supplemented by a positive semidefinite correction in the single direction of the total nonlinear force.  More precisely, substitution of the scalar update gives a linear equation for $\phi^{n+1}$ containing
\[
    \mathcal B_1^n\psi
    =
    \frac{1}{2(Q^n)^2}\,(U^n,\psi)\,U^n,
    \qquad
    U^n:=\frac{\delta E_1}{\delta\phi}[\phi^n],
    \qquad
    Q^n:=\sqrt{E_1[\phi^n]+C}.
\]
This correction is rank one: all information in the nonlinear force is compressed into the single vector $U^n$.  Such compression is natural when the nonlinear energy behaves as one coherent term, but it can be too restrictive when $E_1$ contains several terms with different scales, different spatial locations, or different physical meanings.

The MSAV approach keeps more of this componentwise information.  If
\[
    E_1[\phi]+C
    =
    \sum_{\alpha=1}^{r}\bigl(E_{1,\alpha}[\phi]+C_\alpha\bigr),
    \qquad
    E_{1,\alpha}[\phi]+C_\alpha>0,
\]
then eliminating the $r$ component auxiliary variables in the first-order MSAV scheme \eqref{eq:s2-msav-cont-a}--\eqref{eq:s2-msav-cont-b} produces the correction
\[
    \mathcal B_r^n\psi
    =
    \sum_{\alpha=1}^{r}
    \frac{1}{2(Q_\alpha^n)^2}\,(U_\alpha^n,\psi)\,U_\alpha^n,
    \qquad
    U_\alpha^n:=\frac{\delta E_{1,\alpha}}{\delta\phi}[\phi^n],
    \qquad
    Q_\alpha^n:=\sqrt{E_{1,\alpha}[\phi^n]+C_\alpha}.
\]
The range of $\mathcal B_r^n$ lies in the span of the component forces $U_1^n,\ldots,U_r^n$, so the correction can represent several independent force directions instead of only their sum.  This observation separates two effects that are coupled in MSAV: the use of several scalar auxiliary variables and the use of a higher-rank correction in the state equation.  The central idea of this paper is that the second effect can be obtained without the first.

We therefore introduce a pullback-corrected SAV (PB-SAV) scheme.  PB-SAV retains the single scalar variable $q$ associated with the total nonlinear energy $E_1[\phi]+C$, so the modified energy and scalar update remain those of standard SAV.  The only change is in the eliminated state equation: the rank-one SAV correction $\mathcal B_1^n$ is replaced by the component correction $\mathcal B_r^n$. PB-SAV thus keeps one scalar auxiliary variable, the scalar modified-energy law, and the low-rank perturbation of the semi-implicit solve of SAV, while using the componentwise correction of MSAV.

The term \emph{pullback} describes this metric viewpoint.  Let
\[
    R(\phi):=\bigl(Q_1(\phi),\ldots,Q_r(\phi)\bigr),
    \qquad
    Q(\phi)=\|R(\phi)\|_2.
\]
Then
\[
    \mathcal B_r^n=2\,DR(\phi^n)^*DR(\phi^n),
    \qquad
    \mathcal B_1^n=2\,DQ(\phi^n)^*DQ(\phi^n).
\]
Thus standard SAV first collapses the component map $R$ to the scalar norm $Q$ and then forms a rank-one pullback correction.  PB-SAV instead pulls back the Euclidean metric of the full component space before this collapse.  The one-component decomposition recovers standard SAV, whereas finer decompositions add positive semidefinite directions, can be changed adaptively from step to step, and can improve trajectory accuracy without introducing componentwise auxiliary variables.

The main contributions are as follows.  First, we construct the first-order PB-SAV family, prove a modified-energy dissipation law with the same scalar modified energy as SAV, and establish first-order convergence after a fixed spatial discretization under standard smoothness and positivity assumptions.  Second, we show that the pullback correction dominates the rank-one SAV correction in the positive semidefinite order; the gap is an explicit weighted variance of normalized component directional derivatives.  This identity also gives a monotonicity result under refinement of a component partition.  Third, we allow the decomposition to change from step to step and prove that the modified-energy law is unchanged, because the auxiliary variable tracks only the total nonlinear energy.  Fourth, we derive a Sherman--Morrison--Woodbury solver form for the low-rank perturbation of the usual semi-implicit operator.  Finally, we give a finite-dimensional optimization viewpoint in which the PB-SAV correction is the Gauss--Newton matrix of a least-squares representation of the nonlinear energy, and we use numerical examples to identify when the higher-rank correction is mainly a change in error constant and when it provides a substantial trajectory improvement.

The paper is organized as follows.  Section~\ref{sec:sav-msav} reviews the SAV and MSAV reformulations and fixes notation.  Section~\ref{sec:rank-r-sav} derives the pullback correction operators, introduces PB-SAV, proves the stability, convergence, and refinement results, and describes the low-rank solver and step-dependent decompositions.  Section~\ref{sec:optimization-viewpoint} gives the finite-dimensional optimization interpretation and the relaxation of the auxiliary variable that preserves the modified-energy law.  Section~\ref{sec:numerics} presents numerical experiments for finite-dimensional problems, Allen--Cahn dynamics, a finite-rank nonlocal gradient flow, and a linear nonlocal Cahn--Hilliard model.  Section~\ref{sec:conclusion} concludes.  We restrict the analysis in this paper to first-order time discretizations.

\section{Gradient flows and SAV preliminaries}
\label{sec:sav-msav}

\subsection{Gradient flow and energy splitting}
\label{subsec:gradient-flow-splitting}

We first recall the standard gradient flow notation. Let $\phi(t)$ satisfy
\begin{equation}
\frac{\partial \phi}{\partial t}
=
\mathcal G \mu,
\qquad
\mu=\frac{\delta E}{\delta\phi},
\label{eq:s2-general-gradient-flow}
\end{equation}
where $(\cdot,\cdot)$ denotes the standard $L^2$ inner product on the spatial domain and $\mathcal G$ is a self-adjoint, negative semidefinite linear operator. Then the energy dissipates according to
\begin{equation}
\frac{d}{dt}E[\phi(t)]
=
\left(
\frac{\delta E}{\delta\phi},
\mathcal G\frac{\delta E}{\delta\phi}
\right)
\le 0.
\label{eq:s2-continuous-energy-law}
\end{equation}

We use the splitting
\begin{equation}
E[\phi]=\tfrac12(\phi,\mathcal L\phi)+E_1[\phi],
\label{eq:s2-energy-splitting}
\end{equation}
where $\mathcal L$ is self-adjoint and positive semidefinite. We write
\begin{equation}
U[\phi]
:=
\frac{\delta E_1}{\delta\phi}
\label{eq:s2-U-definition}
\end{equation}
for the nonlinear force, so that $\delta E/\delta\phi=\mathcal L\phi+U[\phi]$. Throughout, $E_1$ is assumed to be bounded from below, and $C$ is chosen so that $E_1[\phi]+C>0$ on the admissible set. Let
\(
Q(\phi):=\sqrt{E_1[\phi]+C}.
\)
The shift $C$ affects the auxiliary variable but not the variational derivative $\delta E_1/\delta\phi$.

\subsection{SAV reformulation and first-order scheme}
\label{subsec:first-order-sav}

The SAV reformulation augments the state by one scalar auxiliary variable $q(t)$ and writes the gradient flow as the coupled system
\begin{align}
\frac{\partial \phi}{\partial t}
&=
\mathcal G\left(
\mathcal L\phi+\frac{q}{Q(\phi)}U[\phi]
\right),
\label{eq:s2-sav-cont-a}
\\
\frac{dq}{dt}
&=
\frac{1}{2Q(\phi)}
\left(
U[\phi],\frac{\partial \phi}{\partial t}
\right).
\label{eq:s2-sav-cont-b}
\end{align}
The reformulation is equivalent to the original gradient flow when the auxiliary variable is initialized consistently. Indeed, if $q(0)=Q(\phi(0))$, then
\[
q(t)^2-E_1[\phi(t)]-C
\]
is constant in time. Hence $q(t)=Q(\phi(t))$ for all \(t\), and
\eqref{eq:s2-sav-cont-a} is equivalent to the original gradient flow
\eqref{eq:s2-general-gradient-flow}.

For the splitting \eqref{eq:s2-energy-splitting}, the natural SAV modified energy is
\begin{equation}
\widetilde E[\phi,q]
=
\tfrac12(\phi,\mathcal L\phi)+q^2.
\label{eq:s2-modified-energy-scalar}
\end{equation}
Taking the inner product of \eqref{eq:s2-sav-cont-a} with
$\mathcal L\phi+(q/Q(\phi))U[\phi]$ and using \eqref{eq:s2-sav-cont-b} gives
\begin{equation}
\frac{d}{dt}\left(\tfrac12(\phi,\mathcal L\phi)+q^2\right)
=
\left(
\mathcal L\phi+\frac{q}{Q(\phi)}U[\phi],
\mathcal G\left(
\mathcal L\phi+\frac{q}{Q(\phi)}U[\phi]
\right)
\right)
\le0.
\label{eq:s2-sav-cont-energy-law}
\end{equation}

Let $Q^n:=Q(\phi^n)$ and $U^n:=U[\phi^n]$. The first-order SAV time discretization treats the linear term $\mathcal L\phi$ implicitly and freezes the nonlinear force at the previous time level:
\begin{align}
\frac{\phi^{n+1}-\phi^n}{\Delta t}
&=
\mathcal G\left(
\mathcal L\phi^{n+1}
+
\frac{q^{n+1}}{Q^n}U^n
\right),
\label{eq:s2-sav-discrete-a}
\\
\frac{q^{n+1}-q^n}{\Delta t}
&=
\frac{1}{2Q^n}
\left(
U^n,\frac{\phi^{n+1}-\phi^n}{\Delta t}
\right).
\label{eq:s2-sav-discrete-b}
\end{align}
\begin{proposition}
\label{prop:sav-energy-law}
The scheme \eqref{eq:s2-sav-discrete-a}--\eqref{eq:s2-sav-discrete-b} satisfies
\begin{align}
&\frac{1}{\Delta t}
\left[
\widetilde E[\phi^{n+1},q^{n+1}]
-
\widetilde E[\phi^n,q^n]
\right]
+
\frac{1}{\Delta t}
\left[
(q^{n+1}-q^n)^2
+
\tfrac12\bigl(\phi^{n+1}-\phi^n,\mathcal L(\phi^{n+1}-\phi^n)\bigr)
\right]
\nonumber\\
&\qquad
=
\left(
\mathcal L\phi^{n+1}+\frac{q^{n+1}}{Q^n}U^n,\;
\mathcal G\left(
\mathcal L\phi^{n+1}+\frac{q^{n+1}}{Q^n}U^n
\right)
\right)
\le0.
\label{eq:s2-sav-discrete-law}
\end{align}
\end{proposition}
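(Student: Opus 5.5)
We prove the identity by pairing the discrete state equation with the bracketed quantity and using the two elementary identities $2a(a-b)=a^2-b^2+(a-b)^2$ and its self-adjoint operator analogue. Write $\delta\phi:=\phi^{n+1}-\phi^n$ and
\[
\mu^{n+1}:=\mathcal L\phi^{n+1}+\frac{q^{n+1}}{Q^n}U^n .
\]
Then \eqref{eq:s2-sav-discrete-a} reads $\delta\phi/\Delta t=\mathcal G\mu^{n+1}$.

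First, take the $L^2$ inner product of \eqref{eq:s2-sav-discrete-a} with $\mu^{n+1}$. The right-hand side is $(\mu^{n+1},\mathcal G\mu^{n+1})$, which is the right-hand side of \eqref{eq:s2-sav-discrete-law}. It is nonpositive because $\mathcal G$ is negative semidefinite. The left-hand side splits into two terms:
\[
\frac{1}{\Delta t}(\mathcal L\phi^{n+1},\delta\phi)
\qquad\text{and}\qquad
\frac{q^{n+1}}{Q^n}\Bigl(U^n,\frac{\delta\phi}{\Delta t}\Bigr).
\]

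Second, handle the linear term. Since $\mathcal L$ is self-adjoint,
\[
2(\mathcal L\phi^{n+1},\phi^{n+1}-\phi^n)
=(\phi^{n+1},\mathcal L\phi^{n+1})-(\phi^n,\mathcal L\phi^n)+(\delta\phi,\mathcal L\delta\phi).
\]
Multiplying by $\tfrac12$ gives the difference of the quadratic parts of $\widetilde E$ plus the dissipation term $\tfrac12(\delta\phi,\mathcal L\delta\phi)$.

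Third, handle the nonlinear term using \eqref{eq:s2-sav-discrete-b}. That equation gives
\[
\frac{1}{Q^n}\Bigl(U^n,\frac{\delta\phi}{\Delta t}\Bigr)=\frac{2(q^{n+1}-q^n)}{\Delta t},
\]
so the nonlinear term equals $2q^{n+1}(q^{n+1}-q^n)/\Delta t$. Applying the scalar identity, this becomes
\[
\frac{1}{\Delta t}\Bigl[(q^{n+1})^2-(q^n)^2+(q^{n+1}-q^n)^2\Bigr].
\]

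Adding the two pieces and dividing by $\Delta t$ where needed yields \eqref{eq:s2-sav-discrete-law} exactly. No step is a genuine obstacle. The only care needed is bookkeeping of the factor $2$ in the $q$-equation, which is exactly what matches $q^2$ in $\widetilde E$ (rather than $\tfrac12 q^2$), and the use of self-adjointness of $\mathcal L$ to symmetrize the quadratic term. Positive semidefiniteness of $\mathcal L$ is not needed for the identity itself; it only ensures that the extra bracket on the left is nonnegative, so that the modified energy is nonincreasing.
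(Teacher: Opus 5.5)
Your proof is correct and is essentially the paper's argument: pair the state equation with $\mu^{n+1}$, symmetrize the $\mathcal L$ term using self-adjointness, and rewrite the $U^n$ term through the scalar update and the identity $2a(a-b)=a^2-b^2+(a-b)^2$. Your remark that semidefiniteness of $\mathcal L$ is needed only for the sign of the extra dissipation term, not for the identity itself, is also accurate.
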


\begin{proof}
Set
\(
\delta\phi^{n+1}:=\phi^{n+1}-\phi^n
\)
and
\(
\mu^{n+1}:=\mathcal L\phi^{n+1}+(q^{n+1}/Q^n)U^n
\).
Taking the inner product of \eqref{eq:s2-sav-discrete-a} with $\mu^{n+1}$ gives
\[
\frac1{\Delta t}(\delta\phi^{n+1},\mu^{n+1})
=
(\mu^{n+1},\mathcal G\mu^{n+1}).
\]
The left-hand side contains
\[
(\mathcal L\phi^{n+1},\delta\phi^{n+1})
=
\tfrac12(\phi^{n+1},\mathcal L\phi^{n+1})
-
\tfrac12(\phi^n,\mathcal L\phi^n)
+
\tfrac12(\delta\phi^{n+1},\mathcal L\delta\phi^{n+1}),
\]
where self-adjointness of $\mathcal L$ is used.  From the scalar update
\eqref{eq:s2-sav-discrete-b},
\[
\frac{q^{n+1}}{Q^n}(U^n,\delta\phi^{n+1})
=
2q^{n+1}(q^{n+1}-q^n)
=
(q^{n+1})^2-(q^n)^2+(q^{n+1}-q^n)^2.
\]
Substitution gives \eqref{eq:s2-sav-discrete-law}.  The final inequality follows from the assumed nonpositivity of $\mathcal G$.
\end{proof}

The only implicit state term is the linear term $\mathcal L\phi^{n+1}$. Hence \eqref{eq:s2-sav-discrete-a}--\eqref{eq:s2-sav-discrete-b} is linear in $(\phi^{n+1},q^{n+1})$. Eliminating $q^{n+1}$ gives a linear system for $\phi^{n+1}$ whose principal operator is the time-independent semi-implicit operator $I-\Delta t\,\mathcal G\mathcal L$.

\subsection{MSAV with a component decomposition}
\label{subsec:msav}

Suppose that $E_1$ admits a decomposition
\begin{equation}
E_1[\phi]+C
=
\sum_{\alpha=1}^r
\left(E_{1,\alpha}[\phi]+C_\alpha\right),
\qquad
E_{1,\alpha}[\phi]+C_\alpha>0.
\label{eq:s2-component-split}
\end{equation}
A decomposition satisfying the positivity condition in \eqref{eq:s2-component-split} will be called \emph{admissible}. The shifts $C_\alpha$ make the square roots below well defined. They do not change the component forces $U_\alpha=\delta E_{1,\alpha}/\delta\phi$, but they do appear in the correction through the weights $(Q_\alpha)^2$. Define
\begin{equation}
Q_\alpha(\phi)
:=
\sqrt{E_{1,\alpha}[\phi]+C_\alpha},
\qquad
U_\alpha[\phi]
:=
\frac{\delta E_{1,\alpha}}{\delta\phi},
\qquad
\alpha=1,\dots,r.
\label{eq:s2-msav-q}
\end{equation}
MSAV assigns one auxiliary variable to each component and uses
\begin{align}
\frac{\partial \phi}{\partial t}
&=
\mathcal G\left(
\mathcal L\phi
+
\sum_{\alpha=1}^r
\frac{q_\alpha}{Q_\alpha(\phi)}
U_\alpha[\phi]
\right),
\label{eq:s2-msav-cont-a}
\\
\frac{dq_\alpha}{dt}
&=
\frac{1}{2Q_\alpha(\phi)}
\left(
U_\alpha[\phi],\frac{\partial\phi}{\partial t}
\right),
\qquad
\alpha=1,\dots,r.
\label{eq:s2-msav-cont-b}
\end{align}
With consistent initial data $q_\alpha(0)=Q_\alpha(\phi(0))$ for all $\alpha$, the MSAV system is equivalent to the original gradient flow.

The corresponding first-order MSAV scheme treats $\mathcal L\phi$ implicitly and freezes each component force explicitly:
\begin{align}
\frac{\phi^{n+1}-\phi^n}{\Delta t}
&=
\mathcal G\left(
\mathcal L\phi^{n+1}
+
\sum_{\alpha=1}^r
\frac{q_\alpha^{n+1}}{Q_\alpha^n}
U_\alpha^n
\right),
\label{eq:s2-msav-discrete-a}
\\
\frac{q_\alpha^{n+1}-q_\alpha^n}{\Delta t}
&=
\frac{1}{2Q_\alpha^n}
\left(
U_\alpha^n,\frac{\phi^{n+1}-\phi^n}{\Delta t}
\right),
\qquad
\alpha=1,\dots,r.
\label{eq:s2-msav-discrete-b}
\end{align}
Let \(\mathbf q=(q_1,\dots,q_r)\). The scheme satisfies the corresponding
modified-energy law with
\[
\widetilde E[\phi,\mathbf q]
=
\tfrac12(\phi,\mathcal L\phi)
+
\sum_{\alpha=1}^r q_\alpha^2.
\]
The proof is the same as the proof of Proposition~\ref{prop:sav-energy-law}, applied component by component.

\section{The pullback-corrected SAV scheme}
\label{sec:rank-r-sav}
The distinction between SAV and MSAV has two parts: MSAV uses more scalar auxiliary variables, and it also changes the correction that appears after those variables are eliminated.  We now isolate the second effect, which is the basis for PB-SAV.

\subsection{Pullback correction operators from SAV and MSAV}
\label{subsec:variational-form}

For SAV, substitute the scalar update \eqref{eq:s2-sav-discrete-b} into the state equation \eqref{eq:s2-sav-discrete-a}.  The eliminated state equation is
\begin{equation}
\frac{\phi^{n+1}-\phi^n}{\Delta t}
=
\mathcal G\left(
\mathcal L\phi^{n+1}
+
\frac{q^n}{Q^n}U^n
+
\mathcal B^n_1(\phi^{n+1}-\phi^n)
\right),
\label{eq:s3-scalar-eliminated}
\end{equation}
where
\begin{equation}
\mathcal B^n_1\psi
:=
\frac{1}{2(Q^n)^2}
\bigl(U^n,\psi\bigr)U^n.
\label{eq:s3-B-sav}
\end{equation}
Then
\[
(\psi,\mathcal B^n_1\psi)
=
\frac{(U^n,\psi)^2}{2(Q^n)^2}\ge0,
\qquad
\operatorname{range}(\mathcal B^n_1)\subseteq\operatorname{span}\{U^n\}.
\]

Suppose $E_1$ admits the admissible component decomposition \eqref{eq:s2-component-split}  with
\[
Q_\alpha^n:=\sqrt{E_{1,\alpha}[\phi^n]+C_\alpha},
\qquad
U_\alpha^n:=\frac{\delta E_{1,\alpha}}{\delta\phi}[\phi^n].
\]
For MSAV, the same elimination applied to the component updates \eqref{eq:s2-msav-discrete-b} and the state equation \eqref{eq:s2-msav-discrete-a} gives
\begin{equation}
\frac{\phi^{n+1}-\phi^n}{\Delta t}
=
\mathcal G\left(
\mathcal L\phi^{n+1}
+
\sum_{\alpha=1}^r
\frac{q_\alpha^n}{Q_\alpha^n}U_\alpha^n
+
\mathcal B^n_r(\phi^{n+1}-\phi^n)
\right),
\label{eq:s3-msav-eliminated}
\end{equation}
where
\begin{equation}
\mathcal B^n_r\psi
:=
\sum_{\alpha=1}^r
\frac{1}{2(Q_\alpha^n)^2}
\bigl(U_\alpha^n,\psi\bigr)U_\alpha^n,
\label{eq:s3-B-r}
\end{equation}
and 
\[
(\psi,\mathcal B^n_r\psi)
=
\sum_{\alpha=1}^r \frac{(U_\alpha^n,\psi)^2}{2(Q_\alpha^n)^2}\ge0,
\qquad
\operatorname{range}(\mathcal B^n_r)
\subseteq
\operatorname{span}\{U_1^n,\dots,U_r^n\}.
\]

\begin{remark}[Pullback form of the correction]
\label{rem:pullback}
The terminology comes from the component square-root map
\(R(\phi):=\bigl(Q_1(\phi),\dots,Q_r(\phi)\bigr)\),
for which the admissible decomposition gives
\(Q(\phi)^2=E_1[\phi]+C=\sum_{\alpha=1}^r Q_\alpha(\phi)^2=\|R(\phi)\|_2^2\).
Since \(DQ_\alpha(\phi^n)\psi=(U_\alpha^n,\psi)/(2Q_\alpha^n)\), where
\(D\) denotes the Fr\'echet derivative and \(^{*}\) its Hilbert-space adjoint, the correction
\eqref{eq:s3-B-r} is
\[
\mathcal B_r^n=2\,DR(\phi^n)^*DR(\phi^n),
\]
the Euclidean metric on the component variables pulled back to the state variable.
Similarly, the rank-one correction \eqref{eq:s3-B-sav} is
\(\mathcal B_1^n=2\,DQ(\phi^n)^*DQ(\phi^n)\). In finite dimensions,
\(\mathcal B_r^n\) is the Gauss--Newton matrix associated with the least-squares
energy \(\|R(\phi)\|_2^2\).
\end{remark}

\begin{lemma}
\label{lem:B-ordering}
\label{lem:weighted-variance-gap}
Assume \eqref{eq:s2-component-split}. Let
\[
Q^n:=\sqrt{E_1[\phi^n]+C},
\qquad
U^n:=\frac{\delta E_1}{\delta\phi}[\phi^n]
=\sum_{\alpha=1}^r U_\alpha^n .
\]
For any direction \(\psi\), define
\[
y_\alpha
:=
\frac{(U_\alpha^n,\psi)}{(Q_\alpha^n)^2},
\qquad
\bar y
:=
\frac{\sum_{\alpha=1}^r (Q_\alpha^n)^2 y_\alpha}
{\sum_{\alpha=1}^r (Q_\alpha^n)^2}.
\]
Then
\begin{equation}
\left(\psi,(\mathcal B_r^n-\mathcal B_1^n)\psi\right)
=
\frac12
\sum_{\alpha=1}^r
(Q_\alpha^n)^2
\left(y_\alpha-\bar y\right)^2
\ge 0 .
\label{eq:s3-weighted-variance-gap}
\end{equation}
In particular,
\[
\mathcal B_r^n\succeq\mathcal B_1^n,
\]
where \(A\succeq B\) means that \(A-B\) is positive semidefinite. Moreover, the
two corrections agree in the direction \(\psi\) if and only if
\(y_\alpha=\bar y\) for every \(\alpha\).
\end{lemma}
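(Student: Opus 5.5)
The plan is to express both quadratic forms in terms of the variables $y_\alpha$ and the weights $w_\alpha:=(Q_\alpha^n)^2>0$, and then use the elementary weighted-variance identity. Admissibility makes every $w_\alpha$ positive and gives $\sum_\alpha w_\alpha=(Q^n)^2$.

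\emph{Rewriting the forms.} Since $(U_\alpha^n,\psi)=w_\alpha y_\alpha$, the definition \eqref{eq:s3-B-r} gives
\[
(\psi,\mathcal B_r^n\psi)=\sum_{\alpha=1}^r\frac{(U_\alpha^n,\psi)^2}{2w_\alpha}=\frac12\sum_{\alpha=1}^r w_\alpha y_\alpha^2 .
\]
For the rank-one form I will use $U^n=\sum_\alpha U_\alpha^n$. This holds because the shifts $C_\alpha$ do not affect variational derivatives, so differentiating \eqref{eq:s2-component-split} gives $\delta E_1/\delta\phi=\sum_\alpha\delta E_{1,\alpha}/\delta\phi$. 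Then
\[
(U^n,\psi)=\sum_\alpha w_\alpha y_\alpha=(Q^n)^2\,\bar y,
\]
and from \eqref{eq:s3-B-sav},
\[
(\psi,\mathcal B_1^n\psi)=\frac{(Q^n)^4\bar y^2}{2(Q^n)^2}=\frac12\Bigl(\sum_\alpha w_\alpha\Bigr)\bar y^2 .
\]

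\emph{The variance identity.} Expand $\sum_\alpha w_\alpha(y_\alpha-\bar y)^2$. The cross term is $-2\bar y\sum_\alpha w_\alpha y_\alpha=-2\bar y^2\sum_\alpha w_\alpha$. Hence
\[
\sum_\alpha w_\alpha(y_\alpha-\bar y)^2=\sum_\alpha w_\alpha y_\alpha^2-\Bigl(\sum_\alpha w_\alpha\Bigr)\bar y^2 .
\]
Multiplying by $\tfrac12$ and subtracting the two forms above gives \eqref{eq:s3-weighted-variance-gap}. The right-hand side is nonnegative because each $w_\alpha>0$.

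\emph{The ordering and the equality case.} Since the identity holds for every $\psi$ and both operators are self-adjoint, nonnegativity of the quadratic form of $\mathcal B_r^n-\mathcal B_1^n$ is exactly $\mathcal B_r^n\succeq\mathcal B_1^n$. For the equality statement, the gap vanishes if and only if each term $w_\alpha(y_\alpha-\bar y)^2$ vanishes. Because every $w_\alpha$ is strictly positive, this happens if and only if $y_\alpha=\bar y$ for all $\alpha$.

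The only real obstacle is bookkeeping: confirming that $U^n=\sum_\alpha U_\alpha^n$ and that $\sum_\alpha w_\alpha=(Q^n)^2$. Both follow directly from \eqref{eq:s2-component-split}, with admissibility supplying the strict positivity that the equality characterization needs.
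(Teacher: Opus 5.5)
Your proposal is correct and matches the paper's proof step for step: you rewrite both quadratic forms in terms of the weights $(Q_\alpha^n)^2$ and the quantities $y_\alpha$, apply the weighted-variance identity, and read off the ordering and the equality case. You also state two facts the paper uses only implicitly: that $U^n=\sum_\alpha U_\alpha^n$ holds because the shifts do not affect variational derivatives, and that the strict positivity of each $(Q_\alpha^n)^2$ is what gives the equality characterization.
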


\begin{proof}
By the definition of \(\mathcal B_r^n\),
\[
(\psi,\mathcal B_r^n\psi)
=
\frac12
\sum_{\alpha=1}^r
\frac{(U_\alpha^n,\psi)^2}{(Q_\alpha^n)^2}
=
\frac12
\sum_{\alpha=1}^r
(Q_\alpha^n)^2 y_\alpha^2 .
\]
Since \(U^n=\sum_{\alpha=1}^r U_\alpha^n\) and
\((Q^n)^2=\sum_{\alpha=1}^r (Q_\alpha^n)^2\),
\[
(\psi,\mathcal B_1^n\psi)
=
\frac12
\frac{\left(\sum_{\alpha=1}^r (U_\alpha^n,\psi)\right)^2}
{\sum_{\alpha=1}^r (Q_\alpha^n)^2}
=
\frac12
\left(\sum_{\alpha=1}^r (Q_\alpha^n)^2\right)\bar y^2 .
\]
Therefore
\[
\left(\psi,(\mathcal B_r^n-\mathcal B_1^n)\psi\right)
=
\frac12
\left[
\sum_{\alpha=1}^r (Q_\alpha^n)^2 y_\alpha^2
-
\left(\sum_{\alpha=1}^r (Q_\alpha^n)^2\right)\bar y^2
\right].
\]
The expression in brackets is the weighted variance identity
\[
\sum_{\alpha=1}^r (Q_\alpha^n)^2 y_\alpha^2
-
\left(\sum_{\alpha=1}^r (Q_\alpha^n)^2\right)\bar y^2
=
\sum_{\alpha=1}^r
(Q_\alpha^n)^2(y_\alpha-\bar y)^2,
\]
which proves \eqref{eq:s3-weighted-variance-gap}. The nonnegativity gives
\(\mathcal B_r^n\succeq\mathcal B_1^n\). Equality in the direction \(\psi\)
holds exactly when all terms in the weighted variance vanish, i.e., when
\(y_\alpha=\bar y\) for every \(\alpha\).
\end{proof}

The same variance identity gives a useful monotonicity statement under refinement of the component partition.

\begin{proposition}
\label{prop:refinement-monotonicity}
Let \eqref{eq:s2-component-split} be a decomposition into \(r\)
components. Let \(\mathcal P_{\rm c}\) and \(\mathcal P_{\rm f}\) be two
partitions of \(\{1,\dots,r\}\), with \(\mathcal P_{\rm f}\) refining
\(\mathcal P_{\rm c}\). For a partition \(\mathcal P\), define
\[
\mathcal B_{\mathcal P}^n\psi
:=
\sum_{G\in\mathcal P}
\frac{1}{2w_G}
\bigl(U_G^n,\psi\bigr)U_G^n,
\qquad
U_G^n:=\sum_{\alpha\in G}U_\alpha^n,
\qquad
w_G:=\sum_{\alpha\in G}(Q_\alpha^n)^2 .
\]
Then
\[
\mathcal B_{\mathcal P_{\rm f}}^n
\succeq
\mathcal B_{\mathcal P_{\rm c}}^n .
\]
More precisely, for every direction \(\psi\),
\begin{equation}
\bigl(\psi,(\mathcal B_{\mathcal P_{\rm f}}^n
-\mathcal B_{\mathcal P_{\rm c}}^n)\psi\bigr)
=
\frac12
\sum_{G\in\mathcal P_{\rm c}}
\sum_{\substack{H\in\mathcal P_{\rm f}\\ H\subseteq G}}
w_H\,(y_H-\bar y_G)^2,
\label{eq:s3-refinement-variance}
\end{equation}
where
\[
y_H:=\frac{(U_H^n,\psi)}{w_H},
\qquad
\bar y_G:=\frac{(U_G^n,\psi)}{w_G}.
\]
\end{proposition}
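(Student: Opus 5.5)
The plan is to reduce the statement to Lemma~\ref{lem:weighted-variance-gap}, applied separately inside each coarse block. Both operators are sums over blocks of the coarse partition. Since \(\mathcal P_{\rm f}\) refines \(\mathcal P_{\rm c}\), every \(H\in\mathcal P_{\rm f}\) lies in exactly one \(G\in\mathcal P_{\rm c}\). We can therefore regroup the fine sum as
\[
(\psi,\mathcal B_{\mathcal P_{\rm f}}^n\psi)
=\frac12\sum_{G\in\mathcal P_{\rm c}}\sum_{\substack{H\in\mathcal P_{\rm f}\\ H\subseteq G}}\frac{(U_H^n,\psi)^2}{w_H},
\qquad
(\psi,\mathcal B_{\mathcal P_{\rm c}}^n\psi)=\frac12\sum_{G\in\mathcal P_{\rm c}}\frac{(U_G^n,\psi)^2}{w_G}.
\]
It then suffices to prove, for each fixed \(G\), the block identity
\[
\sum_{H\subseteq G}\frac{(U_H^n,\psi)^2}{w_H}-\frac{(U_G^n,\psi)^2}{w_G}=\sum_{H\subseteq G}w_H(y_H-\bar y_G)^2,
\]
and then sum over \(G\).

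For the block identity, two additivity relations do the work. Because the fine blocks \(H\subseteq G\) partition \(G\), we have \(U_G^n=\sum_{H\subseteq G}U_H^n\) and \(w_G=\sum_{H\subseteq G}w_H\). Writing \((U_H^n,\psi)=w_Hy_H\) gives
\[
\bar y_G=\frac{\sum_{H\subseteq G}w_Hy_H}{\sum_{H\subseteq G}w_H},
\]
which is the \(w_H\)-weighted mean of the \(y_H\). The left side of the block identity becomes \(\sum_H w_Hy_H^2-w_G\bar y_G^2\). This equals \(\sum_H w_H(y_H-\bar y_G)^2\) by the same weighted variance identity used in the proof of Lemma~\ref{lem:weighted-variance-gap}: expand the square and use \(\sum_H w_Hy_H=w_G\bar y_G\).

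Equivalently, one can invoke the lemma directly. Treat \(G\) as a sub-energy with the aggregated components \(\{E_{1,H}\}_{H\subseteq G}\), where \(E_{1,H}+C_H:=\sum_{\alpha\in H}(E_{1,\alpha}+C_\alpha)\) is positive with square \(w_H\). This is an admissible decomposition of \(\sum_{\alpha\in G}(E_{1,\alpha}+C_\alpha)\), so the lemma's \(\mathcal B_r^n-\mathcal B_1^n\) gap for this sub-decomposition is exactly the block identity.

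The one point requiring care is that the blocks have well-defined positive weights: \(w_H>0\) because each \((Q_\alpha^n)^2>0\) by admissibility, so every \(y_H\) and \(\bar y_G\) is defined. I do not expect any real obstacle beyond bookkeeping the regrouping of the fine sum by coarse blocks. Summing the block identities over \(G\) gives \eqref{eq:s3-refinement-variance}. The right-hand side is nonnegative, which yields \(\mathcal B_{\mathcal P_{\rm f}}^n\succeq\mathcal B_{\mathcal P_{\rm c}}^n\).
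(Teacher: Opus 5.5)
Your proof is correct and follows the paper's own route: regroup the fine sum by coarse blocks, use $U_G^n=\sum_{H\subseteq G}U_H^n$ and $w_G=\sum_{H\subseteq G}w_H$ to apply the weighted-variance identity of Lemma~\ref{lem:B-ordering} inside each block $G$, and then sum over $G\in\mathcal P_{\rm c}$. Your explicit remark that $w_H>0$ by admissibility, so that every $y_H$ and $\bar y_G$ is defined, is a small point the paper leaves implicit.
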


\begin{proof}
Fix \(\psi\) and a block \(G\in\mathcal P_{\rm c}\). The refined blocks
\(H\in\mathcal P_{\rm f}\) with \(H\subseteq G\) form a partition of \(G\), so
\[
U_G^n=\sum_{H\subseteq G}U_H^n,
\qquad
w_G=\sum_{H\subseteq G}w_H .
\]
Applying Lemma~\ref{lem:B-ordering} to this block gives
\[
\sum_{H\subseteq G}\frac{(U_H^n,\psi)^2}{w_H}
-
\frac{(U_G^n,\psi)^2}{w_G}
=
\sum_{H\subseteq G}w_H\,(y_H-\bar y_G)^2 .
\]
Summing over \(G\in\mathcal P_{\rm c}\) and multiplying by \(1/2\) gives
\eqref{eq:s3-refinement-variance}. The right-hand side is nonnegative, hence
\(\mathcal B_{\mathcal P_{\rm f}}^n
\succeq
\mathcal B_{\mathcal P_{\rm c}}^n\).
\end{proof}

\subsection{The PB-SAV scheme and stability}
\label{subsec:rank-r-first-order}

We now define the PB-SAV update.  The scalar variable remains the single SAV auxiliary variable for the total energy, but the rank-one correction in \eqref{eq:s3-scalar-eliminated} is replaced by the pullback correction $\mathcal B_r^n$.  The scheme is
\begin{align}
\frac{\phi^{n+1}-\phi^n}{\Delta t}
&=
\mathcal G\left(
\mathcal L\phi^{n+1}
+
\frac{q^n}{Q^n}U^n
+
\mathcal B_r^n(\phi^{n+1}-\phi^n)
\right),
\label{eq:s3-rank-r-a}
\\
\frac{q^{n+1}-q^n}{\Delta t}
&=
\frac{1}{2Q^n}
\left(
U^n,\frac{\phi^{n+1}-\phi^n}{\Delta t}
\right).
\label{eq:s3-rank-r-b}
\end{align}
\begin{theorem}
\label{thm:rank-r-stability}
Assume that the component decomposition
\eqref{eq:s2-component-split} is admissible. Then the scheme
\eqref{eq:s3-rank-r-a}--\eqref{eq:s3-rank-r-b} satisfies a modified-energy
dissipation law. More precisely, let
\[
\widetilde E[\phi,q]=\tfrac12(\phi,\mathcal L\phi)+q^2,
\qquad
\mathcal S_r^n:=\mathcal B_r^n-\mathcal B_1^n.
\]
Then $\mathcal S_r^n\succeq0$, and
\begin{align}
&\frac{1}{\Delta t}
\left[
\widetilde E[\phi^{n+1},q^{n+1}]
-
\widetilde E[\phi^n,q^n]
\right]
+
\frac{(q^{n+1}-q^n)^2}{\Delta t}
+
\frac{1}{2\Delta t}\bigl(\phi^{n+1}-\phi^n,\mathcal L(\phi^{n+1}-\phi^n)\bigr)
\nonumber\\
&\qquad
+
\frac{1}{\Delta t}
\left(
\phi^{n+1}-\phi^n,\mathcal S_r^n(\phi^{n+1}-\phi^n)
\right)
=
\left(
\Xi^{n+1},
\mathcal G\Xi^{n+1}
\right)
\le0,
\label{eq:s3-rank-r-energy-law}
\end{align}
where
\[
\Xi^{n+1}
:=
\mathcal L\phi^{n+1}
+
\frac{q^n}{Q^n}U^n
+
\mathcal B_r^n(\phi^{n+1}-\phi^n).
\]
In particular, $\widetilde E[\phi^{n+1},q^{n+1}]
\le \widetilde E[\phi^n,q^n]$.
\end{theorem}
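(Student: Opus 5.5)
The argument mirrors the proof of Proposition~\ref{prop:sav-energy-law}, with one extra step: splitting $\mathcal B_r^n=\mathcal B_1^n+\mathcal S_r^n$. Positive semidefiniteness $\mathcal S_r^n\succeq0$ is exactly Lemma~\ref{lem:B-ordering}, which applies because the decomposition is admissible. Set $\delta\phi^{n+1}:=\phi^{n+1}-\phi^n$.

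First I would rewrite $\Xi^{n+1}$ in terms of $q^{n+1}$. From \eqref{eq:s3-rank-r-b},
\[
q^{n+1}-q^n=\frac{(U^n,\delta\phi^{n+1})}{2Q^n},
\qquad
\mathcal B_1^n\delta\phi^{n+1}=\frac{(U^n,\delta\phi^{n+1})}{2(Q^n)^2}U^n=\frac{q^{n+1}-q^n}{Q^n}U^n .
\]
It follows that
\[
\Xi^{n+1}=\mathcal L\phi^{n+1}+\frac{q^{n+1}}{Q^n}U^n+\mathcal S_r^n\delta\phi^{n+1}.
\]

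Next I would take the inner product of \eqref{eq:s3-rank-r-a} with $\Xi^{n+1}$. The right-hand side is $(\Xi^{n+1},\mathcal G\Xi^{n+1})$, and this is $\le0$ because $\mathcal G$ is nonpositive. The left-hand side is $\Delta t^{-1}(\delta\phi^{n+1},\Xi^{n+1})$, which I would expand into three terms:
\begin{itemize}
\item $(\mathcal L\phi^{n+1},\delta\phi^{n+1})=\tfrac12(\phi^{n+1},\mathcal L\phi^{n+1})-\tfrac12(\phi^n,\mathcal L\phi^n)+\tfrac12(\delta\phi^{n+1},\mathcal L\delta\phi^{n+1})$;
\item $\frac{q^{n+1}}{Q^n}(U^n,\delta\phi^{n+1})=2q^{n+1}(q^{n+1}-q^n)=(q^{n+1})^2-(q^n)^2+(q^{n+1}-q^n)^2$;
\item $(\delta\phi^{n+1},\mathcal S_r^n\delta\phi^{n+1})$, which is nonnegative by the first step.
\end{itemize}
Collecting these terms gives \eqref{eq:s3-rank-r-energy-law} exactly.

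The final inequality $\widetilde E[\phi^{n+1},q^{n+1}]\le\widetilde E[\phi^n,q^n]$ then follows: every remaining term on the left is nonnegative, using $\mathcal L\succeq0$ and $\mathcal S_r^n\succeq0$.

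The only step that is not routine is recognizing that $\mathcal B_1^n\delta\phi^{n+1}$ converts $q^n$ into $q^{n+1}$ through the scalar update. Once that is seen, the result reduces to the SAV proof plus the term controlled by Lemma~\ref{lem:B-ordering}.
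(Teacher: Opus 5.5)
Your proposal is correct and follows the paper's proof essentially step for step. Like the paper, you split $\mathcal B_r^n=\mathcal B_1^n+\mathcal S_r^n$, use the scalar update to turn $\mathcal B_1^n\delta\phi^{n+1}$ into $\frac{q^{n+1}-q^n}{Q^n}U^n$ so that $\Xi^{n+1}$ carries $q^{n+1}$, test the state equation with $\Xi^{n+1}$, expand the three resulting terms, and take $\mathcal S_r^n\succeq0$ from Lemma~\ref{lem:B-ordering}.
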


\begin{proof}
Set $\delta\phi^{n+1}=\phi^{n+1}-\phi^n$.  Since $\mathcal B_r^n=\mathcal B_1^n+\mathcal S_r^n$ and
\eqref{eq:s3-rank-r-b} implies
\[
\mathcal B_1^n\delta\phi^{n+1}
=
\frac{q^{n+1}-q^n}{Q^n}U^n,
\]
we may rewrite
\[
\Xi^{n+1}
=
\mathcal L\phi^{n+1}
+
\frac{q^{n+1}}{Q^n}U^n
+
\mathcal S_r^n\delta\phi^{n+1}.
\]
Taking the inner product of \eqref{eq:s3-rank-r-a} with $\Xi^{n+1}$ gives
\[
\frac{1}{\Delta t}
\left(\delta\phi^{n+1},\Xi^{n+1}\right)
=
\left(\Xi^{n+1},\mathcal G\Xi^{n+1}\right).
\]
We now identify the terms on the left-hand side with the modified-energy difference and the nonnegative numerical dissipation terms.  The quadratic part satisfies
\[
\left(\mathcal L\phi^{n+1},\delta\phi^{n+1}\right)
=
\tfrac12(\phi^{n+1},\mathcal L\phi^{n+1})-\tfrac12(\phi^n,\mathcal L\phi^n)+\tfrac12\bigl(\delta\phi^{n+1},\mathcal L\delta\phi^{n+1}\bigr),
\]
with the last term nonnegative. The auxiliary-variable update gives
\[
\frac{q^{n+1}}{Q^n}
\left(U^n,\delta\phi^{n+1}\right)
=
2q^{n+1}(q^{n+1}-q^n)
=
(q^{n+1})^2-(q^n)^2+(q^{n+1}-q^n)^2.
\]
The last term is
\[
\left(
\delta\phi^{n+1},
\mathcal S_r^n\delta\phi^{n+1}
\right).
\]
Combining these identities gives \eqref{eq:s3-rank-r-energy-law}. The
right-hand side is nonpositive because $\mathcal G$ is nonpositive, and
$\mathcal S_r^n\succeq0$ by Lemma~\ref{lem:B-ordering}.
\end{proof}
\begin{remark}
Along a smooth exact trajectory, the correction is multiplied by $\phi^{n+1}-\phi^n=O(\Delta t)$.  Therefore replacing $\mathcal B_1^n$ by $\mathcal B_r^n$ does not change the formal first-order consistency of the underlying SAV discretization.
\end{remark}

\subsection{Solver structure}
\label{subsec:solver-structure}

The PB-SAV step is a low-rank update of the semi-implicit operator used by SAV.  Since $\mathcal L$ is self-adjoint and positive semidefinite, rearranging \eqref{eq:s3-rank-r-a} gives
\begin{equation}
\bigl(I-\Delta t\,\mathcal G\mathcal L-\Delta t\,\mathcal G\mathcal B_r^n\bigr)\phi^{n+1}
=
\phi^n+\Delta t\,\mathcal G\frac{q^n}{Q^n}U^n-\Delta t\,\mathcal G\mathcal B_r^n\phi^n.
\label{eq:solver-rankr-system}
\end{equation}
In coordinates that are orthonormal with respect to the discrete inner product, the pullback correction $\mathcal B_r^n$ defined by \eqref{eq:s3-B-r} factors as
\(\mathcal B_r^n=V_nV_n^T\), where
\[
V_n
:=
\Bigl[\tfrac{U_1^n}{\sqrt 2\,Q_1^n},\ldots,\tfrac{U_r^n}{\sqrt 2\,Q_r^n}\Bigr]
\in\mathbb R^{N\times r},
\]
and $N$ is the dimension of the spatial discretization. The system
\eqref{eq:solver-rankr-system} is therefore a perturbation of rank at most \(r\) of
the usual semi-implicit operator
\[
K:=I-\Delta t\,\mathcal G\mathcal L.
\]

\begin{proposition}
\label{prop:solver-invertibility}
Consider a finite-dimensional spatial discretization that preserves the assumed
self-adjointness and semidefiniteness of \(\mathcal G\) and \(\mathcal L\) in
the discrete inner product. Then, for every \(\Delta t>0\), both
\[
K=I-\Delta t\,\mathcal G\mathcal L
\quad\text{and}\quad
I-\Delta t\,\mathcal G(\mathcal L+\mathcal B_r^n)
\]
are nonsingular. Consequently, the PB-SAV state update is uniquely solvable.
\end{proposition}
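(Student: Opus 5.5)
Both claims reduce to one fact: if $\mathcal G$ is self-adjoint and negative semidefinite and $\mathcal M$ is self-adjoint and positive semidefinite, then $I-\Delta t\,\mathcal G\mathcal M$ is injective. In finite dimensions injectivity gives nonsingularity. We apply this with $\mathcal M=\mathcal L$ to obtain $K$, and with $\mathcal M=\mathcal L+\mathcal B_r^n$ for the PB-SAV operator. The second choice is admissible because $\mathcal B_r^n$ is self-adjoint and positive semidefinite by \eqref{eq:s3-B-r}: it is a sum of rank-one terms $\frac{1}{2(Q_\alpha^n)^2}(U_\alpha^n,\cdot)U_\alpha^n$, and the weights are well defined and positive by admissibility \eqref{eq:s2-component-split}.

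To prove injectivity, suppose $(I-\Delta t\,\mathcal G\mathcal M)x=0$, so that $x=\Delta t\,\mathcal G\mathcal M x$. Take the inner product with $\mathcal M x$:
\[
(x,\mathcal M x)=\Delta t\,(\mathcal M x,\mathcal G\mathcal M x).
\]
The left side is nonnegative and the right side is nonpositive, so both vanish. Positive semidefiniteness and self-adjointness of $\mathcal M$ give $\|\mathcal M^{1/2}x\|^2=0$, hence $\mathcal M^{1/2}x=0$ and $\mathcal M x=0$. In finite dimensions we could instead argue via the spectral decomposition, or via Cauchy--Schwarz for the semi-inner product $(x,\mathcal M y)$. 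Then $x=\Delta t\,\mathcal G\mathcal M x=0$.

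One could avoid square roots entirely by noting that $(y,\mathcal M y)=0$ with $\mathcal M\succeq0$ forces $\mathcal M y=0$. This follows by expanding $(y+s z,\mathcal M(y+sz))\ge0$ for all real $s$ and all $z$.

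The main subtlety is that $\mathcal G\mathcal M$ is not self-adjoint, so we cannot argue directly from its eigenvalues. This is why the test function must be $\mathcal M x$ rather than $x$. If $\mathcal G$ is invertible, an alternative is to use the similarity to $\mathcal M^{1/2}(-\mathcal G)\mathcal M^{1/2}$, but the direct argument above covers a singular $\mathcal G$ as well, for example Cahn--Hilliard with $\mathcal G=\Delta$.

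Finally, unique solvability of the PB-SAV update follows. Given $\phi^n$ and $q^n$, equation \eqref{eq:solver-rankr-system} determines $\phi^{n+1}$ uniquely. The scalar update \eqref{eq:s3-rank-r-b} then determines $q^{n+1}$ explicitly, which is well defined because $Q^n>0$.
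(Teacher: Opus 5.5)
Your proof is correct and is essentially the paper's argument: it tests $v=\Delta t\,\mathcal G\mathcal A v$ against $\mathcal A v$ to get $(v,\mathcal A v)=0$, hence $\mathcal A v=0$ and $v=0$. Beyond the paper, you also justify the step $(v,\mathcal A v)=0\Rightarrow\mathcal A v=0$ and the self-adjointness and positive semidefiniteness of $\mathcal B_r^n$, both of which the paper uses without proof.
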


\begin{proof}
Let \(\mathcal A\) denote either \(\mathcal L\) or
\(\mathcal L+\mathcal B_r^n\). In both cases \(\mathcal A\) is self-adjoint
and positive semidefinite. If
\[
\bigl(I-\Delta t\,\mathcal G\mathcal A\bigr)v=0,
\]
then \(v=\Delta t\,\mathcal G\mathcal A v\). Taking the discrete inner
product with \(\mathcal A v\) gives
\[
0\le (v,\mathcal A v)
=\Delta t\,(\mathcal G\mathcal A v,\mathcal A v)\le0.
\]
Hence \((v,\mathcal A v)=0\), so \(\mathcal A v=0\). The defining equation
then yields \(v=0\), which proves nonsingularity.
\end{proof}

Writing the right-hand side of \eqref{eq:solver-rankr-system} as
\[
g_n:=\phi^n+\Delta t\,\mathcal G\frac{q^n}{Q^n}U^n
-\Delta t\,\mathcal G V_nV_n^T\phi^n,
\]
Proposition~\ref{prop:solver-invertibility} and the matrix determinant lemma
show that
\[
I_r-V_n^T K^{-1}(\Delta t\,\mathcal G V_n)
\]
is nonsingular. The Sherman--Morrison--Woodbury formula
\cite{golub2013matrix} therefore gives
\begin{equation}
\phi^{n+1}
=
K^{-1}g_n
+
K^{-1}(\Delta t\,\mathcal G V_n)
\bigl[I_r-V_n^T K^{-1}(\Delta t\,\mathcal G V_n)\bigr]^{-1}
V_n^T K^{-1}g_n.
\label{eq:solver-smw}
\end{equation}

The displayed factorization and formula use orthonormal coordinates. More
generally, if the discrete inner product is
\((u,v)_h=u^TMv\) with mass matrix \(M\), then
\[
\mathcal B_r^n=V_nV_n^TM,
\]
and every occurrence of \(V_n^T\) in the right-hand side and in the reduced
matrix in \eqref{eq:solver-smw} is replaced by \(V_n^TM\). The solvability
argument above applies without change in the \(M\)-inner product.

In practice, each time step requires $r+1$ applications of $K^{-1}$: one to
$g_n$ and one to each column of $\Delta t\,\mathcal G V_n$. Since
$K=I-\Delta t\,\mathcal G\mathcal L$ is independent of $n$, a factorization
or fast solver can be reused throughout the computation. At fixed \(r\),
this is the same number of solves with $K$ as for first-order MSAV; PB-SAV
differs in having one scalar auxiliary variable and in allowing the components
to be regrouped from step to step. On periodic domains,
when both \(\mathcal G\) and \(\mathcal L\) are Fourier multipliers---for
example, \(\mathcal L=-\Delta\) with \(\mathcal G=-I\) or
\(\mathcal G=\Delta\)---the action of \(K^{-1}\) is diagonal in the Fourier
basis.

\subsection{First-order convergence after spatial discretization}
\label{subsec:first-order-convergence}

We record a minimal convergence result for a fixed spatial discretization. It
makes precise the formal consistency observation above without introducing
problem-dependent PDE regularity assumptions.

\begin{theorem}
\label{thm:first-order-convergence}
Consider a fixed finite-dimensional spatial discretization and a fixed
admissible component decomposition. Let
\(z(t)=(\phi(t),q(t))\) be the solution of the SAV reformulation
\eqref{eq:s2-sav-cont-a}--\eqref{eq:s2-sav-cont-b} on \([0,T]\), with
\(q(t)=Q(\phi(t))\) and \(z\in C^2([0,T])\). Assume that, in a neighborhood
of the exact trajectory,
\[
Q(\phi)\ge q_*>0,\qquad Q_\alpha(\phi)\ge q_*>0
\quad (\alpha=1,\ldots,r),
\]
and that \(U\) and \(U_\alpha\) are continuously differentiable with
uniformly bounded derivatives. Then there exist \(\Delta t_0>0\) and a
constant \(C_T\), independent of \(\Delta t\), such that for
\(0<\Delta t\le\Delta t_0\), the PB-SAV scheme
\eqref{eq:s3-rank-r-a}--\eqref{eq:s3-rank-r-b}, initialized by
\((\phi^0,q^0)=(\phi(0),q(0))\), satisfies
\[
\max_{0\le n\le T/\Delta t}
\left(\|\phi^n-\phi(t_n)\|+|q^n-q(t_n)|\right)
\le C_T\Delta t .
\]
The constant \(C_T\) may depend on the fixed spatial discretization.
\end{theorem}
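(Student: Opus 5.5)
The plan is a standard consistency-plus-stability argument for a one-step method in finite dimensions, with a bootstrap (induction) that keeps the numerical solution in the neighborhood where the positivity and smoothness assumptions hold. Write the exact solution values as $\phi(t_n)$, $q(t_n)$ and set the errors $e^n:=\phi^n-\phi(t_n)$, $s^n:=q^n-q(t_n)$. We insert the exact solution into \eqref{eq:s3-rank-r-a}--\eqref{eq:s3-rank-r-b} and define truncation residuals $\tau_\phi^{n}$, $\tau_q^{n}$.

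\textbf{Step 1: consistency.} Since $z\in C^2$, we have $(\phi(t_{n+1})-\phi(t_n))/\Delta t=\partial_t\phi(t_{n+1})+O(\Delta t)$. Moreover, $\mathcal L\phi(t_{n+1})$ is exact. Because $q(t_n)=Q(\phi(t_n))$, the explicit term $\frac{q(t_n)}{Q(\phi(t_n))}U[\phi(t_n)]=U[\phi(t_n)]$ differs from $U[\phi(t_{n+1})]$ by $O(\Delta t)$, using the bounded derivative of $U$. The correction $\mathcal B_r(\phi(t_n))(\phi(t_{n+1})-\phi(t_n))$ is $O(\Delta t)$, since $\|\mathcal B_r\|\le \sum_\alpha\|U_\alpha\|^2/(2q_*^2)$ is bounded along the trajectory; this is the remark following Theorem~\ref{thm:rank-r-stability}. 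The scalar equation is treated in the same way. Hence $\|\tau_\phi^n\|+|\tau_q^n|\le C\Delta t$.

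\textbf{Step 2: error equation.} Subtracting the two systems gives
\[
(I-\Delta t\,\mathcal G\mathcal L-\Delta t\,\mathcal G\mathcal B_r^n)e^{n+1}
= e^n+\Delta t\,\mathcal G\bigl[\tfrac{q^n}{Q^n}U^n-U(\phi(t_n))\bigr]
-\Delta t\,\mathcal G\bigl[\mathcal B_r^n e^n+(\mathcal B_r^n-\mathcal B_r(\phi(t_n)))(\phi(t_{n+1})-\phi(t_n))\bigr]-\Delta t\,\mathcal G\tau_\phi^n .
\]
A similar relation holds for $s^{n+1}$. In a neighborhood of the trajectory, the maps $\phi\mapsto U/Q$, $\phi\mapsto\mathcal B_r(\phi)$ and $\phi\mapsto U_\alpha/Q_\alpha^2$ are Lipschitz, because $Q,Q_\alpha\ge q_*$ and $U,U_\alpha\in C^1$ (so $DQ_\alpha=U_\alpha/(2Q_\alpha)$ is bounded as well). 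Therefore the bracketed differences are bounded by $C(\|e^n\|+|s^n|)$. The key estimate is a uniform bound $\|(I-\Delta t\,\mathcal G(\mathcal L+\mathcal B_r^n))^{-1}\|\le C$ for $\Delta t\le\Delta t_0$. Proposition~\ref{prop:solver-invertibility} gives only invertibility. I would first try to obtain the bound in the $\Delta t$-independent finite-dimensional setting: all operators are fixed matrices with norms bounded by a constant $M$ on the neighborhood. Choosing $\Delta t_0 M\|\mathcal G\|\le 1/2$, a Neumann series gives $\|(\cdot)^{-1}\|\le 2$, and also $\|(\cdot)^{-1}\|\le 1+C\Delta t$. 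This is legitimate since $C_T$ may depend on the discretization. For the $q$-equation, we substitute the now-known bound on $e^{n+1}$. We then get
\[
\|e^{n+1}\|+|s^{n+1}|\le (1+C\Delta t)(\|e^n\|+|s^n|)+C\Delta t^2 .
\]

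\textbf{Step 3: discrete Gronwall and bootstrap.} Since $e^0=s^0=0$, Gronwall yields $\|e^n\|+|s^n|\le C_T\Delta t$ as long as the iterates remain in the neighborhood. The constant depends only on the Lipschitz bounds, $T$, $\|\mathcal G\|$, $\|\mathcal L\|$ and $\|z\|_{C^2}$. We close by induction on $n$. If the bound holds up to step $n$, then $\phi^n$ lies within $C_T\Delta t\le\rho$ of the trajectory for $\Delta t\le\Delta t_0$, where $\rho$ is the neighborhood radius. So the constants used at step $n+1$ are valid, and $C_T$ does not grow with $n$. Observe that we need $Q(\phi^n)\ge q_*$ at the numerical iterate, not $q^n>0$; this is exactly what the bootstrap supplies.

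\textbf{Main obstacle.} The delicate point is the uniform control of the implicit solve in Step 2. The energy law of Theorem~\ref{thm:rank-r-stability} is not needed, and I would not use it. Instead I would rely on the fixed finite dimension, which gives bounded operator norms and allows the Neumann-series bound. If one wanted constants independent of the stiffness of $\mathcal L$, one would instead take the inner product of the error equation with a suitable test function and use the semidefiniteness of $\mathcal G$ and of $\mathcal L+\mathcal B_r^n$. The statement as given does not require this.
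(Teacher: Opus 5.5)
Your proposal is correct and follows essentially the paper's proof. The paper writes one step as $z^{n+1}=z^n+\Delta t\,H_{\Delta t}(z^n)$ and uses the fixed finite dimension to get $\|M_{\Delta t}(\phi)^{-1}\|\le C$ and $\|M_{\Delta t}(\phi)^{-1}-I\|\le C\Delta t$ for $M_{\Delta t}(\phi)=I-\Delta t\,\mathcal G(\mathcal L+\mathcal B_r(\phi))$, which is your Neumann-series bound; it then combines a $(1+C\Delta t)$-Lipschitz one-step map with an $O(\Delta t^2)$ local defect, discrete Gronwall, and an exit-time argument equivalent to your bootstrap.

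In your direct-subtraction version, control the $q$-error through $\|e^{n+1}-e^n\|\le C\Delta t(\|e^n\|+|s^n|)+C\Delta t^2$, read off from the state error equation or from $\|M_{\Delta t}^{-1}-I\|\le C\Delta t$, rather than through $\|e^{n+1}\|$ alone; otherwise you lose the factor $\Delta t$ in front of the coupling term in the $s$-recursion.
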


\begin{proof}
Write
\[
F_\phi(\phi,q)
:=
\mathcal G\left(\mathcal L\phi+\frac{q}{Q(\phi)}U(\phi)\right),
\qquad
F_q(\phi,q)
:=
\frac{1}{2Q(\phi)}\bigl(U(\phi),F_\phi(\phi,q)\bigr),
\]
so that the SAV reformulation is the finite-dimensional ODE
\(\dot z=F(z):=(F_\phi,F_q)\). Let
\[
\mathcal A(\phi):=\mathcal L+\mathcal B_r(\phi),
\qquad
M_{\Delta t}(\phi):=I-\Delta t\,\mathcal G\mathcal A(\phi).
\]
Writing the PB-SAV state equation in terms of
\(\delta\phi^{n+1}:=\phi^{n+1}-\phi^n\) gives
\[
\delta\phi^{n+1}
=
\Delta t\,M_{\Delta t}(\phi^n)^{-1}F_\phi(\phi^n,q^n),
\]
and the scalar update becomes
\[
q^{n+1}-q^n
=
\frac{1}{2Q(\phi^n)}
\bigl(U(\phi^n),\delta\phi^{n+1}\bigr).
\]
Thus one PB-SAV step can be written as
\[
z^{n+1}=\Psi_{\Delta t}(z^n)
=z^n+\Delta t\,H_{\Delta t}(z^n),
\]
where
\[
H_{\Delta t}(\phi,q)
=
\left(
M_{\Delta t}(\phi)^{-1}F_\phi(\phi,q),
\frac{\bigl(U(\phi),M_{\Delta t}(\phi)^{-1}F_\phi(\phi,q)\bigr)}
{2Q(\phi)}
\right).
\]

The lower bounds on \(Q\) and \(Q_\alpha\), together with the stated
smoothness assumptions, imply that \(\mathcal B_r(\phi)\),
\(\mathcal A(\phi)\), and their first derivatives are uniformly bounded in
a compact neighborhood of the exact trajectory. Hence, for
\(\Delta t\) sufficiently small,
\[
\sup_\phi\|M_{\Delta t}(\phi)^{-1}\|\le C,
\qquad
\sup_\phi\|M_{\Delta t}(\phi)^{-1}-I\|\le C\Delta t.
\]
It follows that \(H_{\Delta t}\) is uniformly Lipschitz and
\[
H_{\Delta t}(z)=F(z)+O(\Delta t)
\]
uniformly in that neighborhood. Therefore,
\[
\|\Psi_{\Delta t}(z)-\Psi_{\Delta t}(\widehat z)\|
\le (1+C\Delta t)\|z-\widehat z\|,
\]
and Taylor expansion of the exact solution gives the local defect
\[
z(t_{n+1})
=
\Psi_{\Delta t}(z(t_n))+\rho^{n+1},
\qquad
\|\rho^{n+1}\|\le C\Delta t^2.
\]
With \(e^n=z^n-z(t_n)\), we obtain
\[
\|e^{n+1}\|
\le (1+C\Delta t)\|e^n\|+C\Delta t^2.
\]
Since \(e^0=0\), the discrete Gronwall inequality yields
\(\max_{n\le T/\Delta t}\|e^n\|\le C_T\Delta t\) as long as the numerical
trajectory remains in the chosen neighborhood. Choose a smaller compact
neighborhood whose boundary has positive distance \(d\) from the exact
trajectory, and apply the estimate up to the first possible exit time.
For \(\Delta t<d/(2C_T)\), the error bound precludes such an exit. Hence
the estimate holds on the full interval \([0,T]\).
\end{proof}

\begin{remark}
The same proof applies to any prescribed sequence of step-dependent
decompositions for which the positivity, boundedness, and Lipschitz
constants above are uniform in \(n\). An adaptive rule with discontinuous
switching requires a separate stability analysis of the selection map.
\end{remark}

\subsection{Step-dependent component decompositions}
\label{subsec:step-dependent-decompositions}

A further consequence of using a single auxiliary variable is that the component decomposition used in the correction need not be fixed. Since the scalar auxiliary variable tracks only the total quantity $Q(\phi)^2=E_1[\phi]+C$, the total shift \(C\) is kept fixed while the components may be recombined or repartitioned from one step to the next. In particular, if a step-dependent decomposition is obtained by grouping a fixed base decomposition, then for every block \(G\) we use
\[
E_{1,G}[\phi]:=\sum_{\alpha\in G}E_{1,\alpha}[\phi],
\qquad
C_G:=\sum_{\alpha\in G}C_\alpha.
\]
Thus \(\sum_G C_G=C\), independently of the grouping and of the time level. At step $n$, let
\begin{equation}
E_1[\phi]+C
=
\sum_{\alpha=1}^{r_n}
\left(E_{1,\alpha}^n[\phi]+C_\alpha^n\right),
\qquad
E_{1,\alpha}^n[\phi]+C_\alpha^n>0,
\label{eq:s3-step-split}
\end{equation}
and define
\begin{equation}
\mathcal B_{r_n}^n\psi
:=
\sum_{\alpha=1}^{r_n}
\frac{1}{2(Q_\alpha^n)^2}
\left(U_\alpha^n,\psi\right)U_\alpha^n,
\label{eq:s3-step-B}
\end{equation}
with
\[
Q_\alpha^n:=\sqrt{E_{1,\alpha}^n[\phi^n]+C_\alpha^n},
\qquad
U_\alpha^n:=\frac{\delta E_{1,\alpha}^n}{\delta\phi}[\phi^n].
\]
Replacing $\mathcal B_r^n$ in \eqref{eq:s3-rank-r-a} by
$\mathcal B_{r_n}^n$ and keeping the scalar update
\eqref{eq:s3-rank-r-b} gives
\begin{align}
\frac{\phi^{n+1}-\phi^n}{\Delta t}
&=
\mathcal G\left(
\mathcal L\phi^{n+1}
+
\frac{q^n}{Q^n}U^n
+
\mathcal B_{r_n}^n(\phi^{n+1}-\phi^n)
\right),
\label{eq:s3-step-rank-a}
\\
\frac{q^{n+1}-q^n}{\Delta t}
&=
\frac{1}{2Q^n}
\left(
U^n,\frac{\phi^{n+1}-\phi^n}{\Delta t}
\right).
\label{eq:s3-step-rank-b}
\end{align}

\begin{theorem}
\label{thm:s3-step-dependent-stability}
Assume that the decomposition
\eqref{eq:s3-step-split} is admissible for every step $n$. Let
\[
\mathcal S_{r_n}^n:=\mathcal B_{r_n}^n-\mathcal B_1^n.
\]
Then $\mathcal S_{r_n}^n\succeq0$, and the
step-dependent scheme \eqref{eq:s3-step-rank-a}--\eqref{eq:s3-step-rank-b}
satisfies
\begin{align}
&\frac{1}{\Delta t}
\left[
\widetilde E[\phi^{n+1},q^{n+1}]
-
\widetilde E[\phi^n,q^n]
\right]
+
\frac{(q^{n+1}-q^n)^2}{\Delta t}
+
\frac{1}{2\Delta t}\bigl(\phi^{n+1}-\phi^n,\mathcal L(\phi^{n+1}-\phi^n)\bigr)
\nonumber\\
&\qquad
+
\frac{1}{\Delta t}
\left(
\phi^{n+1}-\phi^n,
\mathcal S_{r_n}^n(\phi^{n+1}-\phi^n)
\right)
=
\left(
\Xi_{\mathrm{sd}}^{n+1},
\mathcal G\Xi_{\mathrm{sd}}^{n+1}
\right)
\le0,
\label{eq:s3-step-energy-law}
\end{align}
where
\[
\Xi_{\mathrm{sd}}^{n+1}
:=
\mathcal L\phi^{n+1}
+
\frac{q^n}{Q^n}U^n
+
\mathcal B_{r_n}^n(\phi^{n+1}-\phi^n).
\]
In particular, $\widetilde E[\phi^{n+1},q^{n+1}]
\le \widetilde E[\phi^n,q^n]$ for every admissible step-dependent decomposition
sequence.
\end{theorem}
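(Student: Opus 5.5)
The plan is to reduce the step-dependent statement, one step at a time, to the fixed-decomposition argument of Theorem~\ref{thm:rank-r-stability}. The energy identity in that theorem is a single-step identity: the only objects entering step $n$ are $\phi^n$, $q^n$, $Q^n$, $U^n$ and the correction operator used at that step. Nothing in it ties the decomposition at step $n$ to the one at step $n-1$.

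First I establish $\mathcal S_{r_n}^n\succeq0$. Fix $n$. The decomposition \eqref{eq:s3-step-split} is an admissible decomposition of the same total $E_1[\phi]+C$, so it satisfies $\sum_\alpha U_\alpha^n=U^n$ and $\sum_\alpha (Q_\alpha^n)^2=(Q^n)^2$. The first identity follows by differentiating \eqref{eq:s3-step-split}, since the constants drop out. The second follows by evaluating \eqref{eq:s3-step-split} at $\phi^n$. These are the only facts Lemma~\ref{lem:B-ordering} uses, so applying it with $r=r_n$ gives the weighted-variance formula and hence $\mathcal B_{r_n}^n\succeq\mathcal B_1^n$. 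Here $\mathcal B_1^n$ is the SAV correction built from $U^n$ and $Q^n$, and it does not depend on the decomposition.

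Next I prove the energy identity. From \eqref{eq:s3-step-rank-b} I get $\mathcal B_1^n\delta\phi^{n+1}=\frac{q^{n+1}-q^n}{Q^n}U^n$. With this I rewrite
\[
\Xi_{\mathrm{sd}}^{n+1}=\mathcal L\phi^{n+1}+\frac{q^{n+1}}{Q^n}U^n+\mathcal S_{r_n}^n\delta\phi^{n+1}.
\]
I then take the inner product of \eqref{eq:s3-step-rank-a} with $\Xi_{\mathrm{sd}}^{n+1}$ and expand the three terms as in the proof of Theorem~\ref{thm:rank-r-stability}:
\begin{itemize}
\item the polarization identity for the self-adjoint $\mathcal L$;
\item the identity $2q^{n+1}(q^{n+1}-q^n)=(q^{n+1})^2-(q^n)^2+(q^{n+1}-q^n)^2$, obtained from the scalar update;
\item the leftover quadratic form $(\delta\phi^{n+1},\mathcal S_{r_n}^n\delta\phi^{n+1})\ge0$.
\end{itemize}
This gives \eqref{eq:s3-step-energy-law}. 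The right-hand side is nonpositive because $\mathcal G$ is nonpositive. Dropping the nonnegative dissipation terms gives monotone decay of $\widetilde E$.

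The step I expect to need the most care is the consistency check of the scalar variable across steps. The modified energy $\widetilde E[\phi,q]=\tfrac12(\phi,\mathcal L\phi)+q^2$ and the update \eqref{eq:s3-step-rank-b} must be the same at every step, so that the per-step inequalities telescope. This holds because $q$ tracks only the total $E_1+C$ with a fixed total shift $C$, and $Q^n=\sqrt{E_1[\phi^n]+C}$ does not depend on how $C$ is split as $\sum_\alpha C_\alpha^n$. That is why I must insist the total shift is step-independent, as in the grouping construction. Once this is verified, chaining the single-step inequalities gives the conclusion for every admissible sequence.
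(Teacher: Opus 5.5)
Your proposal is correct and follows the paper's proof essentially step for step. The stepwise identities $\sum_\alpha U_\alpha^n=U^n$ and $\sum_\alpha (Q_\alpha^n)^2=(Q^n)^2$ feed Lemma~\ref{lem:B-ordering} to give $\mathcal S_{r_n}^n\succeq0$, and rewriting $\Xi_{\mathrm{sd}}^{n+1}$ via the scalar update before testing the state equation with it gives the identity. Your closing concern about a step-independent total shift is already part of the hypothesis, since \eqref{eq:s3-step-split} uses the fixed $C$. The monotonicity claim is stated per step, so no telescoping is needed.
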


\begin{proof}
Fix a time level $n$ and set
\(\delta\phi^{n+1}=\phi^{n+1}-\phi^n\).  The step-$n$ admissible decomposition implies
\[
\sum_{\alpha=1}^{r_n}U_\alpha^n=U^n,
\qquad
\sum_{\alpha=1}^{r_n}(Q_\alpha^n)^2=(Q^n)^2.
\]
Therefore Lemma~\ref{lem:B-ordering}, applied to this particular decomposition, gives
\(\mathcal S_{r_n}^n=\mathcal B_{r_n}^n-\mathcal B_1^n\succeq0\).  Moreover, the scalar update \eqref{eq:s3-step-rank-b} gives
\[
\mathcal B_1^n\delta\phi^{n+1}
=
\frac{q^{n+1}-q^n}{Q^n}U^n.
\]
Hence
\[
\Xi_{\mathrm{sd}}^{n+1}
=
\mathcal L\phi^{n+1}
+
\frac{q^{n+1}}{Q^n}U^n
+
\mathcal S_{r_n}^n\delta\phi^{n+1}.
\]
Taking the inner product of \eqref{eq:s3-step-rank-a} with
\(\Xi_{\mathrm{sd}}^{n+1}\) and using the two identities
\[
(\mathcal L\phi^{n+1},\delta\phi^{n+1})
=
\tfrac12(\phi^{n+1},\mathcal L\phi^{n+1})-\tfrac12(\phi^n,\mathcal L\phi^n)
+\tfrac12(\delta\phi^{n+1},\mathcal L\delta\phi^{n+1})
\]
and
\[
\frac{q^{n+1}}{Q^n}(U^n,\delta\phi^{n+1})
=(q^{n+1})^2-(q^n)^2+(q^{n+1}-q^n)^2
\]
gives \eqref{eq:s3-step-energy-law}.  The right-hand side is nonpositive because
$\mathcal G$ is nonpositive, and the additional term involving
$\mathcal S_{r_n}^n$ is nonnegative by the first part of the proof.
\end{proof}

\begin{remark}
\label{rem:step-dependent-vs-msav}
The step-dependent construction uses the fact that the auxiliary
variable tracks only the total quantity \(Q(\phi)^2=E_1[\phi]+C\). The component
decomposition at step \(n\) enters only through the pullback correction
\(\mathcal B_{r_n}^n\). Hence the stability proof requires only the stepwise
identities
\[
\sum_{\alpha=1}^{r_n} U_\alpha^n = U^n,
\qquad
\sum_{\alpha=1}^{r_n} (Q_\alpha^n)^2 = (Q^n)^2,
\]
and does not require component auxiliary variables to be matched across time
steps. This differs from MSAV, where the modified energy contains the
componentwise variables \(q_\alpha\).
\end{remark}

\subsection{An adaptive grouping algorithm}
\label{subsec:variance-guided-grouping}

The step-dependent formulation lets the grouping of the base decomposition change
from one step to the next, and the weighted-variance identity provides a selection criterion. Recall the base decomposition
$E_1[\phi]+C=\sum_{\alpha=1}^{r}\bigl(E_{1,\alpha}[\phi]+C_\alpha\bigr)$, with
correction $\mathcal B_r^n$. A grouping $\mathcal P$ partitions the components $\{1,\dots,r\}$ into blocks $G$, with the correction $\mathcal B_{\mathcal P}^n$; the single block recovers $\mathcal B_1^n$, and resolving every component separately recovers $\mathcal B_r^n$. Fix a direction
$\psi^n$ (for example $\phi^n-\phi^{n-1}$, or a rank-one SAV predictor). With
$w_\alpha=(Q_\alpha^n)^2$ and, as in Lemma~\ref{lem:B-ordering} and
Proposition~\ref{prop:refinement-monotonicity},
\[
y_\alpha=\frac{(U_\alpha^n,\psi^n)}{w_\alpha},
\qquad
\bar y_G=\frac{\sum_{\alpha\in G}(U_\alpha^n,\psi^n)}{\sum_{\alpha\in G}w_\alpha},
\]
the part of the rank-$r$ gap that $\mathcal P$ gives up in the direction $\psi^n$ is
\[
\mathcal V(\mathcal P)
:=\bigl(\psi^n,(\mathcal B_r^n-\mathcal B_{\mathcal P}^n)\psi^n\bigr)
=\frac12\sum_{G\in\mathcal P}\sum_{\alpha\in G}w_\alpha\,(y_\alpha-\bar y_G)^2 ,
\]
which, by Proposition~\ref{prop:refinement-monotonicity}, does not increase under
refinement: it is the full gap $(\psi^n,(\mathcal B_r^n-\mathcal B_1^n)\psi^n)$ for
the single block and vanishes when every component is resolved separately.

Algorithm~\ref{alg:adaptive-grouping} uses a restricted class of these groupings.
Let $M_0\subseteq\{1,\dots,r\}$ be the components resolved separately and
$M_1=\{1,\dots,r\}\setminus M_0$ the rest, merged into one block.
The resulting correction has \( |M_0|+1 \) grouped directions, or \(r\) when
\(M_1=\varnothing\); its actual matrix rank is at most this nominal rank.
Writing $\mathcal V(M_0)$ for
$\mathcal V$ of this grouping, we have:
\[
\mathcal V(M_0)
=
\frac12\sum_{\alpha\in M_1}w_\alpha\,(y_\alpha-\bar y_{M_1})^2,
\qquad
\bar y_{M_1}=\frac{\sum_{\alpha\in M_1}(U_\alpha^n,\psi^n)}{\sum_{\alpha\in M_1}w_\alpha}.
\]
The full gap is $\mathcal V(\varnothing)$, and $\mathcal V(M_0)$ decreases to $0$ as
$M_0$ grows to all of $\{1,\dots,r\}$. We refine until the recovered fraction
reaches a target $\tau\in(0,1]$, i.e.\ until
$\mathcal V(M_0)\le(1-\tau)\,\mathcal V(\varnothing)$, separating at each step,
greedily, the component that reduces $\mathcal V$ the most.

\begin{algorithm}[ht]
\caption{Greedy adaptive grouping for the step-$n$ PB-SAV update}
\label{alg:adaptive-grouping}
\begin{algorithmic}[1]
\Require base decomposition $\{E_{1,\alpha}\}_{\alpha=1}^{r}$; direction $\psi^n$;
retention fraction $\tau\in(0,1]$
\State $M_0\gets\varnothing$ \Comment{all components merged; correction $\mathcal B_1^n$}
\While{$\mathcal V(M_0)>(1-\tau)\,\mathcal V(\varnothing)$}
  \State $\displaystyle\alpha^\star\in\operatorname*{arg\,min}_{\alpha\notin M_0}
  \mathcal V(M_0\cup\{\alpha\})$ \Comment{greedy step}
  \State $M_0\gets M_0\cup\{\alpha^\star\}$
\EndWhile
\State form the grouping from $M_0$ and take the PB-SAV update
\eqref{eq:s3-step-rank-a}--\eqref{eq:s3-step-rank-b}
\end{algorithmic}
\end{algorithm}

Because Algorithm~\ref{alg:adaptive-grouping} returns an admissible grouping at the
current step, Theorem~\ref{thm:s3-step-dependent-stability} applies to any sequence
it produces; the variance criterion is an a posteriori rule for selecting which
component directions are active in the current update.

\section{Finite-dimensional optimization viewpoint}
\label{sec:optimization-viewpoint}

We now give a finite-dimensional interpretation of the correction. It relates PB-SAV to Gauss--Newton and motivates the relaxation of Section~\ref{subsec:q2-relaxation}.

Consider the unconstrained problem
\[
\min_{\phi\in\mathbb R^m} f(\phi),
\qquad
f(\phi)=E_1(\phi).
\]
In Sections~\ref{subsec:s4-gradient-step} and~\ref{subsec:s4-convex-quadratic} we take $\mathcal L=0$ and $\mathcal G=-I_m$, and write $Q(\phi)=\sqrt{f(\phi)+C}$; the relaxation of Section~\ref{subsec:q2-relaxation} is stated for general $\mathcal G$ and $\mathcal L$.

\subsection{SAV and PB-SAV as gradient steps}
\label{subsec:s4-gradient-step}

For SAV, substituting the scalar update into the state equation gives
\begin{equation}
\phi^{n+1}
=
\phi^n-\alpha_n P_1^n \nabla f(\phi^n),
\qquad
\alpha_n:=\Delta t\,\frac{q^n}{Q(\phi^n)},
\qquad
P_1^n:=\bigl(I+\Delta t\,\mathcal B_1^n\bigr)^{-1},
\label{eq:s4-plain-iterate}
\end{equation}
where $\mathcal B_1^n$ is rank one,
\begin{equation}
\mathcal B_1^n=\frac{\nabla f(\phi^n)\nabla f(\phi^n)^T}{2Q(\phi^n)^2}.
\label{eq:s4-plain-rank-one-correction}
\end{equation}
Since $\mathcal B_1^n\succeq0$, the matrix
\(I+\Delta t\,\mathcal B_1^n\) is positive definite, so \(P_1^n\) is
well defined. The positivity of the effective step length follows from the
next observation.

\begin{proposition}
\label{prop:q-positivity}
In the finite-dimensional optimization setting
\(\mathcal L=0\), \(\mathcal G=-I_m\), assume
\(q^0=Q(\phi^0)>0\). For either SAV or PB-SAV, let
\(\mathcal S_r^n=\mathcal B_r^n-\mathcal B_1^n\), with
\(\mathcal S_r^n=0\) in the SAV case. Then
\begin{equation}
q^{n+1}
=
\frac{q^n}{
1+\dfrac{\Delta t}{2(Q^n)^2}
\left(
U^n,(I+\Delta t\,\mathcal S_r^n)^{-1}U^n
\right)}
>0.
\label{eq:s4-q-positive}
\end{equation}
Consequently, \(q^n>0\) for every \(n\).
\end{proposition}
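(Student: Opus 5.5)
With $\mathcal L=0$ and $\mathcal G=-I_m$, the PB-SAV state equation \eqref{eq:s3-rank-r-a} is a linear equation for the increment $\delta\phi^{n+1}=\phi^{n+1}-\phi^n$:
\[
\delta\phi^{n+1}=-\Delta t\Bigl(\tfrac{q^n}{Q^n}U^n+\mathcal B_r^n\delta\phi^{n+1}\Bigr).
\]
My first step is to split $\mathcal B_r^n=\mathcal B_1^n+\mathcal S_r^n$. By the scalar update \eqref{eq:s3-rank-r-b}, $\mathcal B_1^n\delta\phi^{n+1}=\frac{q^{n+1}-q^n}{Q^n}U^n$, exactly as in the proof of Theorem~\ref{thm:rank-r-stability}. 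Substituting this, the equation becomes
\[
(I+\Delta t\,\mathcal S_r^n)\,\delta\phi^{n+1}=-\Delta t\,\frac{q^{n+1}}{Q^n}\,U^n .
\]
By Lemma~\ref{lem:B-ordering}, $\mathcal S_r^n\succeq0$, so $I+\Delta t\,\mathcal S_r^n$ is symmetric positive definite and invertible. In the SAV case $\mathcal S_r^n=0$ and this step is trivial. Solving gives $\delta\phi^{n+1}=-\Delta t\,\frac{q^{n+1}}{Q^n}(I+\Delta t\,\mathcal S_r^n)^{-1}U^n$.

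Next I plug this back into the scalar update $q^{n+1}-q^n=\frac{1}{2Q^n}(U^n,\delta\phi^{n+1})$. This gives the scalar linear relation
\[
q^{n+1}\Bigl[1+\frac{\Delta t}{2(Q^n)^2}\bigl(U^n,(I+\Delta t\,\mathcal S_r^n)^{-1}U^n\bigr)\Bigr]=q^n .
\]
Because $(I+\Delta t\,\mathcal S_r^n)^{-1}$ is positive definite, the bracket is at least $1$. So one can divide by it, which yields \eqref{eq:s4-q-positive}. It also follows that $q^{n+1}$ has the same sign as $q^n$.

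Finally, induction from $q^0=Q(\phi^0)>0$ gives $q^n>0$ for all $n$. This needs $Q^n>0$ at every step, which holds because $C$ is chosen so that $E_1+C>0$ on the admissible set. The identity is also consistent with Proposition~\ref{prop:solver-invertibility}, which guarantees unique solvability.

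The only delicate point is the elimination order. One should not invert $I+\Delta t\,\mathcal B_r^n$ directly, since its rank-one part couples to $q^n$. Moving the $\mathcal B_1^n$ piece onto $q^{n+1}$ first is what leaves only the positive definite operator $I+\Delta t\,\mathcal S_r^n$ to invert.
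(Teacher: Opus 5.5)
Your proposal is correct and follows the paper's proof essentially line for line. You use the scalar update to absorb $\mathcal B_1^n\delta\phi^{n+1}$ into $q^{n+1}$, which gives $(I+\Delta t\,\mathcal S_r^n)\delta\phi^{n+1}=-\Delta t\,(q^{n+1}/Q^n)U^n$; you then substitute back into the scalar update, use $\mathcal S_r^n\succeq0$ to bound the denominator below by one, and induct. One small correction to your closing remark: inverting $I+\Delta t\,\mathcal B_r^n$ directly is not wrong, only less convenient, since positivity then needs an extra Sherman--Morrison estimate, whereas your elimination order yields the formula at once.
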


\begin{proof}
Using
\(\mathcal B_r^n=\mathcal B_1^n+\mathcal S_r^n\) and the scalar update,
the PB-SAV state equation can be written as
\[
(I+\Delta t\,\mathcal S_r^n)(\phi^{n+1}-\phi^n)
=
-\Delta t\,\frac{q^{n+1}}{Q^n}U^n.
\]
Substituting this identity into the scalar update gives
\eqref{eq:s4-q-positive}. Since
\(\mathcal S_r^n\succeq0\), the denominator is at least one, and the
conclusion follows by induction. The SAV case is obtained by taking
\(\mathcal S_r^n=0\).
\end{proof}

It follows that \(\alpha_n>0\) in \eqref{eq:s4-plain-iterate}; hence
\(-P_1^n\nabla f(\phi^n)\) is a preconditioned descent direction. Likewise,
the PB-SAV increment is along the preconditioned direction
\(-(I+\Delta t\,\mathcal S_r^n)^{-1}\nabla f(\phi^n)\).

For decomposition \(f(\phi)=\sum_{\alpha=1}^r f_\alpha(\phi)\), define
\[
Q_\alpha(\phi)^2=f_\alpha(\phi)+C_\alpha,
\qquad
R(\phi):=(Q_1(\phi),\ldots,Q_r(\phi)).
\]
Then \(f(\phi)+C=\|R(\phi)\|_2^2\) with $C=\sum_{\alpha=1}^r C_\alpha$, and in finite dimensions
\[
\mathcal B_r^n
=
\sum_{\alpha=1}^r
\frac{\nabla f_\alpha(\phi^n)\nabla f_\alpha(\phi^n)^T}
{2Q_\alpha(\phi^n)^2}=
2\,DR(\phi^n)^TDR(\phi^n).
\]
Thus PB-SAV uses the Gauss--Newton part of the least-squares energy
\(\|R(\phi)\|_2^2\), while standard SAV uses the corresponding scalar pullback
for \(Q(\phi)=\|R(\phi)\|_2\).

\subsection{Model problem: convex quadratic}
\label{subsec:s4-convex-quadratic}

The effect of the pullback correction is easiest to see for a diagonal convex quadratic,
\begin{equation}
f(\phi)=\frac{1}{2}\phi^TA\phi
=\frac{1}{2}\sum_{i=1}^m a_i \phi_i^2,
\qquad
a_i>0.
\label{eq:s4-quadratic}
\end{equation}
With the coordinate decomposition $f_i(\phi)=\tfrac12 a_i \phi_i^2$ and $C_i>0$, each component has $\nabla f_i(\phi)=a_i\phi_i\,e_i$ (only its $i$-th coordinate is nonzero) and $Q_i(\phi)^2=f_i(\phi)+C_i=\tfrac12 a_i\phi_i^2+C_i$. The contribution from component $i$ to $\mathcal B_r^n$ is the rank-one matrix
\[
\frac{\nabla f_i(\phi^n)\nabla f_i(\phi^n)^T}{2Q_i(\phi^n)^2}
=
\frac{a_i^2(\phi_i^n)^2}{a_i(\phi_i^n)^2+2C_i}\,e_i e_i^T,
\]
so summing over $i=1,\dots,m$ gives a diagonal correction:
\begin{equation}
\mathcal B_m^n
=
\operatorname{diag}(b_1^n,\dots,b_m^n),
\qquad
b_i^n:=
\frac{a_i^2(\phi_i^n)^2}{a_i(\phi_i^n)^2+2C_i}
\in[0,a_i].
\label{eq:s4-quadratic-Br}
\end{equation}
When $C_i$ is small relative to $f_i(\phi^n)$, $b_i^n\approx a_i$, so $\mathcal B_m^n$ approximates the Hessian $A$. In that regime
\[
P_m^n
=
\bigl(I+\Delta t\,\mathcal B_m^n\bigr)^{-1}
\approx
\bigl(I+\Delta t\,A\bigr)^{-1},
\]
matching the implicit Euler step for the quadratic model. By contrast, SAV uses only
\begin{equation}
\mathcal B_1^n
=
\frac{(A\phi^n)(A\phi^n)^T}{(\phi^n)^TA\phi^n+2C},
\label{eq:s4-quadratic-B1}
\end{equation}
which is rank one. Grouped decompositions lie between these two cases: $\mathcal B_r^n$ is built from grouped component gradients rather than from the total gradient alone.

\subsection{Relaxation of the auxiliary variable}
\label{subsec:q2-relaxation}

SAV-type schemes have also been used as optimization methods \cite{liu2023efficient,zhang2025relaxed}.  In that setting the time step is often chosen much larger than it would be in a time-accurate computation.  The modified energy may still decrease, but the numerical auxiliary variable $q^n$ can drift away from its defining value $Q(\phi^n)$.  If the ratio $q^n/Q(\phi^n)$ becomes small, the scalar multiplier in the state update suppresses the explicit nonlinear force and the iteration may stagnate before reaching a minimizer.  PB-SAV uses the same scalar auxiliary variable, so this drift of $q^n$ from $Q(\phi^n)$ can arise even when the pullback correction supplies a local metric.

We therefore use a relaxation of the auxiliary variable that preserves the modified-energy law. Let the unrelaxed update \eqref{eq:s3-rank-r-a}--\eqref{eq:s3-rank-r-b} produce $(\phi^{n+1},\widetilde q^{n+1})$:
\begin{align}
\frac{\phi^{n+1}-\phi^n}{\Delta t}
&=
\mathcal G\left(
\mathcal L\phi^{n+1}
+
\frac{q^n}{Q^n}U^n
+
\mathcal B_r^n(\phi^{n+1}-\phi^n)
\right),
\label{eq:s4-unrelaxed-rank-r-a}
\\
\frac{\widetilde q^{n+1}-q^n}{\Delta t}
&=
\frac{1}{2Q^n}
\left(
U^n,\frac{\phi^{n+1}-\phi^n}{\Delta t}
\right).
\label{eq:s4-unrelaxed-rank-r-b}
\end{align}
The unrelaxed update satisfies $\widetilde E[\phi^{n+1},\widetilde q^{n+1}]\le\widetilde E[\phi^n,q^n]$, and we define the dissipation margin
\[
D_n
:=
\widetilde E[\phi^n,q^n]-\widetilde E[\phi^{n+1},\widetilde q^{n+1}]\ge0.
\]
Given an admissible relaxation amount $\delta_n\in[0,D_n]$, the relaxation sets
\begin{equation}
q^{n+1}
:=
\sqrt{\min\bigl(Q(\phi^{n+1})^2,\,(\widetilde q^{n+1})^2+\delta_n\bigr)}.
\label{eq:s4-energy-admissible-relaxation}
\end{equation}
The relaxed value is chosen so that the modified-energy dissipation law is preserved. We take the positive root so that $q^{n+1}\ge0$; the modified energy depends only on $q^2$.

\begin{proposition}
\label{lem:q2-relaxation-energy}
Under the assumptions above, the relaxed pair satisfies
\begin{equation}
\widetilde E[\phi^{n+1},q^{n+1}]
\le
\widetilde E[\phi^n,q^n]-(D_n-\delta_n)
\le
\widetilde E[\phi^n,q^n].
\label{eq:s4-energy-admissible-relaxation-law}
\end{equation}
\end{proposition}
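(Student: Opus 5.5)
The argument is a direct comparison of the $q^2$-contributions, since the relaxation changes only the auxiliary variable and leaves $\phi^{n+1}$ untouched. Write
\[
\widetilde E[\phi^{n+1},q^{n+1}]
=\tfrac12(\phi^{n+1},\mathcal L\phi^{n+1})+(q^{n+1})^2,
\qquad
\widetilde E[\phi^{n+1},\widetilde q^{n+1}]
=\tfrac12(\phi^{n+1},\mathcal L\phi^{n+1})+(\widetilde q^{n+1})^2 .
\]
Subtracting these, the two modified energies differ only by $(q^{n+1})^2-(\widetilde q^{n+1})^2$. The plan is therefore to bound $(q^{n+1})^2$ from above by $(\widetilde q^{n+1})^2+\delta_n$.

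The first step is to check that the definition \eqref{eq:s4-energy-admissible-relaxation} makes sense and gives the needed upper bound. Admissibility gives $Q(\phi^{n+1})^2=E_1[\phi^{n+1}]+C>0$. Also $\delta_n\ge0$, so $(\widetilde q^{n+1})^2+\delta_n\ge0$. Hence the minimum is nonnegative and the square root is real. Squaring \eqref{eq:s4-energy-admissible-relaxation} and using $\min(a,b)\le b$ yields
\[
(q^{n+1})^2=\min\bigl(Q(\phi^{n+1})^2,(\widetilde q^{n+1})^2+\delta_n\bigr)\le(\widetilde q^{n+1})^2+\delta_n .
\]

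Next, add $\tfrac12(\phi^{n+1},\mathcal L\phi^{n+1})$ to both sides of this bound. This gives
\[
\widetilde E[\phi^{n+1},q^{n+1}]\le\widetilde E[\phi^{n+1},\widetilde q^{n+1}]+\delta_n .
\]
By the definition of $D_n$, we have $\widetilde E[\phi^{n+1},\widetilde q^{n+1}]=\widetilde E[\phi^n,q^n]-D_n$. Substituting this gives the first inequality in \eqref{eq:s4-energy-admissible-relaxation-law}.

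The second inequality follows from $\delta_n\le D_n$, so that $D_n-\delta_n\ge0$. Here $D_n\ge0$ is guaranteed by Theorem~\ref{thm:rank-r-stability}, applied to the unrelaxed step \eqref{eq:s4-unrelaxed-rank-r-a}--\eqref{eq:s4-unrelaxed-rank-r-b}, so the interval $[0,D_n]$ is nonempty.

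There is no real obstacle. The only point needing care is that the choice of root for $q^{n+1}$ is irrelevant, because $\widetilde E$ depends on $q$ only through $q^2$.
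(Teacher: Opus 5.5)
Your proposal is correct and follows essentially the same argument as the paper: since $\phi^{n+1}$ is unchanged, the two modified energies differ only by $(q^{n+1})^2-(\widetilde q^{n+1})^2\le\delta_n$. Combining this with the definition of $D_n$ and with $\delta_n\le D_n$ gives both inequalities. Your additional checks (that the square root is real and that $[0,D_n]$ is nonempty) are harmless details the paper leaves implicit.
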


\begin{proof}
$\phi^{n+1}$ is unchanged, so
\[
\widetilde E[\phi^{n+1},q^{n+1}]
-
\widetilde E[\phi^{n+1},\widetilde q^{n+1}]
=
(q^{n+1})^2-(\widetilde q^{n+1})^2
\le
\delta_n
\]
by \eqref{eq:s4-energy-admissible-relaxation}, hence
$\widetilde E[\phi^{n+1},q^{n+1}]
\le \widetilde E[\phi^n,q^n]-(D_n-\delta_n)$.
\end{proof}

The squared defect from \(Q(\phi^{n+1})\) is
\[
Q(\phi^{n+1})^2-(q^{n+1})^2
=
\bigl[Q(\phi^{n+1})^2-(\widetilde q^{n+1})^2-\delta_n\bigr]_+.
\]

Every choice $\delta_n\in[0,D_n]$ satisfies Proposition~\ref{lem:q2-relaxation-energy}.  The unrelaxed energy law of Theorem~\ref{thm:rank-r-stability}, applied to $(\phi^{n+1},\widetilde q^{n+1})$, decomposes $D_n$ into nonnegative contributions, including
\[
S_n:=\bigl(\phi^{n+1}-\phi^n,\,\mathcal S_r^n(\phi^{n+1}-\phi^n)\bigr);
\]
in particular $S_n\in[0,D_n]$. Three concrete choices are
\begin{equation}
\delta_n=\rho D_n,
\qquad
\delta_n=\rho S_n,
\qquad
\delta_n=S_n+\rho(D_n-S_n),
\label{eq:s4-delta-choices}
\end{equation}
with $\rho \in[0,1]$. In the finite-dimensional tests below we use the maximal choice $\delta_n=D_n$.

\section{Numerical experiments}
\label{sec:numerics}

We now report numerical experiments. PDE reference solutions are computed by ETDRK4 \cite{cox2002exponential} with sufficiently small reference time steps, and the spatial discretization is Fourier spectral on the periodic domain $\Omega=[0,2\pi]^2$. Unless stated otherwise, a shift \(C_\alpha=10^{-12}\) is assigned to each component of a fixed finest decomposition. Under grouping, the block shift is inherited as \(C_G=\sum_{\alpha\in G}C_\alpha\); hence the total scalar shift \(C=\sum_\alpha C_\alpha\) is independent of the correction rank and of the time level. These shifts keep every \(Q_\alpha^2=E_{1,\alpha}+C_\alpha\) strictly positive, are numerically negligible, and do not change the component forces or the reference equations. In all reported SAV, PB-SAV, relaxed, and MSAV runs, the corresponding modified energy is nonincreasing to roundoff. The figures therefore emphasize trajectory accuracy, the auxiliary-variable defect $|q^n-Q(\phi^n)|$, and errors against the reference solution.

\subsection{Finite-dimensional test problems}
\label{subsec:exp-optimization}

We first test the finite-dimensional formulation of Section~\ref{sec:optimization-viewpoint} on two problems where the effect of the correction can be read directly from the component forces.  With $\mathcal G=-I_m$, the gradient flow is $\partial_t\phi=-\nabla f(\phi)$. 

\paragraph{Convex quadratic with two distinct eigenvalues}
We begin with $\mathcal L=0$ and the test problem
\begin{equation}
f(\phi)
=
\sum_{k=1}^{50}\phi_{2k-1}^2
+ \frac{1}{100}\sum_{k=1}^{50}\phi_{2k}^2,
\qquad
\phi^0=(1,\ldots,1),
\label{eq:s6-opt-quadratic}
\end{equation}
with component decomposition $f_i(\phi)=\lambda_i \phi_i^2$, where $\lambda_i=1$ on odd coordinates and $\lambda_i=10^{-2}$ on even coordinates.  This objective function has exactly two distinct eigenvalues, and the rank-$2$ decomposition used below groups components by curvature class. We denote by $f^*$ its minimum value.

At $\Delta t=10$, Figure~\ref{fig:optimization-relax-convex} compares SAV and rank-$2$ PB-SAV with and without relaxation. Without relaxation, the rank-$2$ PB-SAV update reduces the objective faster than SAV in the early iterations and lowers the eventual plateau by roughly a factor of five: by iteration $300$ the objective gaps are $4.95\times10^{-1}$ for SAV and $9.31\times10^{-2}$ for rank-$2$ PB-SAV. The large step size, however, drives $q^n$ far below $Q(\phi^n)$ in the first few steps; by iteration $5$ the ratio $q^n/Q(\phi^n)$ has fallen to $2.46\times10^{-3}$ for SAV and $2.99\times10^{-1}$ for rank-$2$ PB-SAV, and by iteration $300$ to roundoff level. The correction determines the local metric, whereas the ratio $q^n/Q(\phi^n)$ scales the explicit nonlinear force.

To restore consistency between $q^n$ and $Q(\phi^n)$, we apply the relaxation~\eqref{eq:s4-energy-admissible-relaxation}
\[
q^{n+1}
:=
\sqrt{\min\bigl(Q(\phi^{n+1})^2,\,(\widetilde q^{n+1})^2+\delta_n\bigr)},
\]
with the maximal admissible step $\delta_n=D_n$, and denote the resulting rank-$1$ and rank-$2$ schemes by relaxed SAV and rank-$2$ relaxed PB-SAV. Figure~\ref{fig:optimization-relax-convex}(b) shows that the relaxation keeps $q^n/Q(\phi^n)$ above $0.49$ for relaxed SAV and keeps it equal to $1$ for the first $65$ iterations of rank-$2$ relaxed PB-SAV. Panel (a) shows the objective gap. Rank-$2$ relaxed PB-SAV crosses $10^{-10}$ at iteration $63$, while relaxed SAV converges to the same optimum more slowly, reaching $7.69\times10^{-10}$ only at iteration $300$. Thus the relaxation keeps the scalar factor large enough for both schemes to continue descending toward the minimizer, and the rank-$2$ correction reduces the iteration count by roughly an order of magnitude in this test.

\begin{figure}[htbp]
  \centering
  \includegraphics[width=0.86\textwidth]{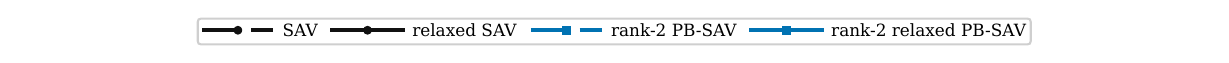}
  \vspace{-0.35em}

  \begin{subfigure}[t]{0.48\textwidth}
    \centering
    \includegraphics[width=\textwidth]{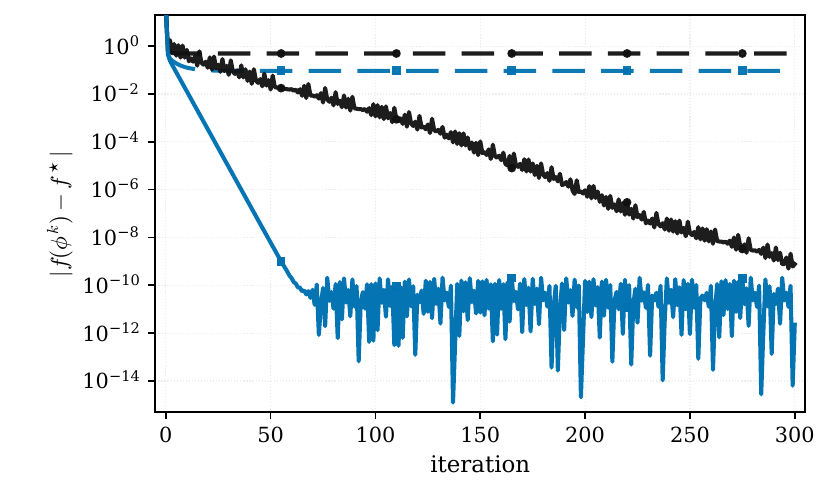}
    \caption{Objective gaps.}
  \end{subfigure}
  \hfill
  \begin{subfigure}[t]{0.48\textwidth}
    \centering
    \includegraphics[width=\textwidth]{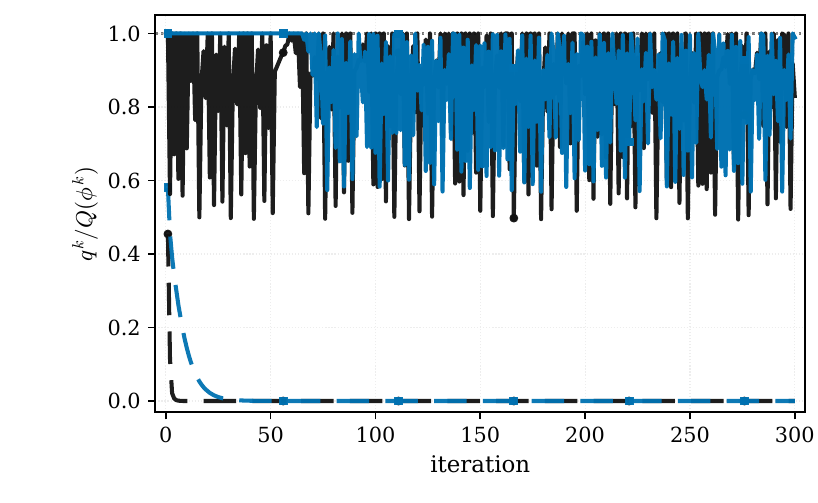}
    \caption{Auxiliary-variable ratio.}
  \end{subfigure}
  \caption{Convex quadratic \eqref{eq:s6-opt-quadratic} at $\Delta t=10$, comparing SAV and rank-$2$ PB-SAV with and without the $\delta_n=D_n$ relaxation. Panel (a) gives objective gaps; panel (b) gives $q^n/Q(\phi^n)$.}
  \label{fig:optimization-relax-convex}
\end{figure}

\paragraph{Regularized least squares with a quadratic part}
The second finite-dimensional test includes a nonzero quadratic part, as in the PDE splittings below.  All vectors are in $\mathbb R^{80}$ and $\Delta t=5$.  We consider the regularized least-squares objective
\[
f(\phi)
=
\tfrac12\phi^T L\phi
+\tfrac12(a_1^T\phi-b_1)^2
+\tfrac12(a_2^T\phi-b_2)^2,
\]
with quadratic part $\tfrac12\phi^T L\phi$; the nonlinear part $E_1$ is the sum of the two residual components. We take
\[
L=\operatorname{diag}(\ell_1,\ldots,\ell_{80}),
\qquad
\ell_j\text{ logarithmically spaced from }0.03\text{ to }0.3.
\]
Let
\(v_1,v_2\in\mathbb R^{80}\) be orthonormal and set
\[
a_1=4v_1,\qquad
a_2=2.8\bigl(\cos\theta\,v_1+\sin\theta\,v_2\bigr),
\qquad
\cos\theta=0.35 .
\]
Thus \(\|a_1\|=4\), \(\|a_2\|=2.8\), and
\(a_1^Ta_2/( \|a_1\|\|a_2\|)=0.35\). We choose a unit vector
\(\phi^\star\) and set \(b_i=a_i^T\phi^\star\), so that \(\phi^\star\) is the
unregularized least-squares target. The initial point is
\(\phi^0=2.5\,\widehat\phi^0\) with \(\|\widehat\phi^0\|=1\).

Here
\[
\nabla^2E_1=a_1a_1^T+a_2a_2^T
\]
is a constant rank-\(2\) matrix. At $\phi^0$, $\mathcal B_2^0$ approximates $\nabla^2 E_1$ to relative Frobenius error $1.92\times10^{-12}$, while SAV's rank-$1$ correction $\mathcal B_1^0$ has relative Frobenius error $3.42\times10^{-1}$. Figure~\ref{fig:optimization-lowrank-ls} shows the two relaxed methods (relaxed SAV and rank-$2$ relaxed PB-SAV): at iteration $200$, the objective gap is $4.82\times10^{-3}$ for relaxed SAV and below \(10^{-18}\) for rank-$2$ relaxed PB-SAV. The rank-$2$ correction recovers the Hessian $\nabla^2E_1$ to roundoff at $\phi^0$, and the resulting objective convergence is faster than relaxed SAV by orders of magnitude. 

\begin{figure}[htbp]
  \centering
  \begin{subfigure}[t]{0.48\textwidth}
    \centering
    \includegraphics[width=\textwidth]{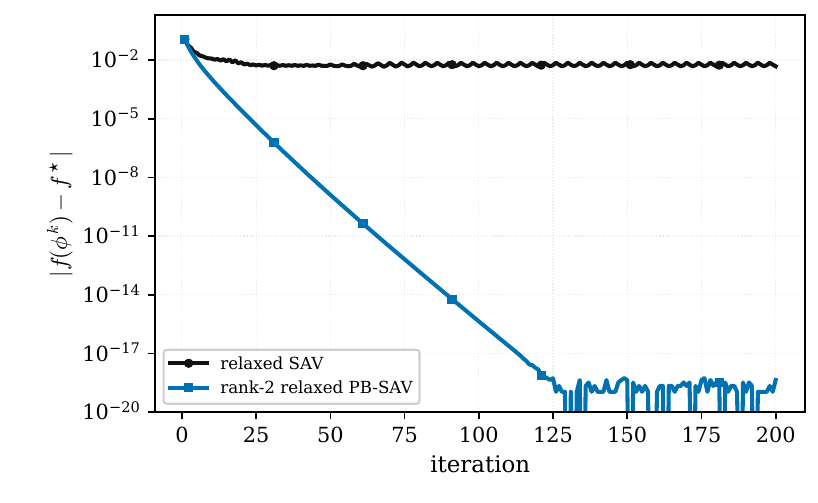}
    \caption{Objective gaps.}
  \end{subfigure}
  \hfill
  \begin{subfigure}[t]{0.48\textwidth}
    \centering
    \includegraphics[width=\textwidth]{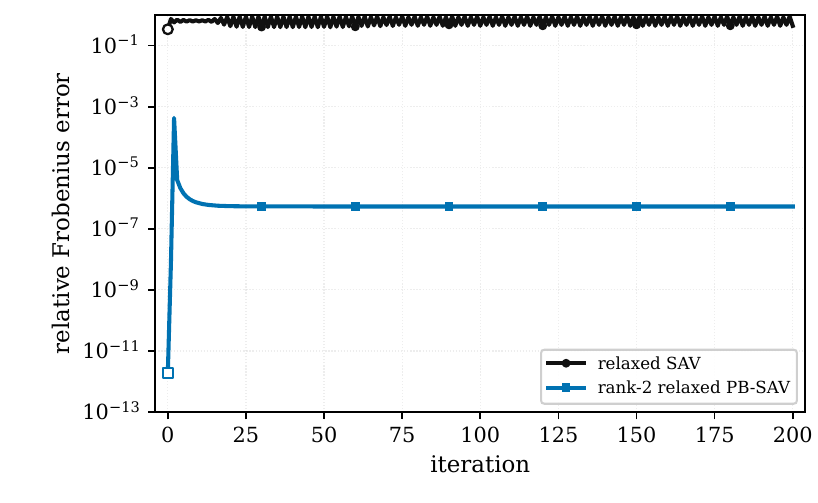}
    \caption{Correction-matrix error.}
  \end{subfigure}
  \caption{Regularized least squares with a quadratic part, showing the two relaxed schemes. Panel (a) gives objective gaps; panel (b) gives the relative Frobenius error of $\mathcal B_r^n$ as an approximation to the Hessian $\nabla^2 E_1$.}
  \label{fig:optimization-lowrank-ls}
\end{figure}

\subsection{Allen--Cahn equation}
\label{subsec:exp-ac}

We next consider the Allen--Cahn energy
\begin{equation}
E[\phi]=\frac12\int_\Omega |\nabla\phi|^2\,dx+E_1[\phi],
\qquad
E_1[\phi]=\int_\Omega \frac{(\phi^2-1)^2}{4\varepsilon^2}\,dx,
\label{eq:s6-ac}
\end{equation}
with $L^2$-gradient flow
\[
\partial_t\phi
=
-\frac{\delta E}{\delta\phi}
=
\Delta\phi-\frac{\phi^3-\phi}{\varepsilon^2}.
\]
The two experiments below use different admissible decompositions of the same nonlinear energy $E_1$.

\subsubsection{Convergence on a smooth initial condition}
\label{subsubsec:exp-ac-convergence}

The first Allen--Cahn test verifies first-order temporal convergence for SAV, PB-SAV, and MSAV, and also records the auxiliary-variable error. Here $\varepsilon^2=1.6\times10^{-3}$. Let \(\eta=10^{-12}\), set
\[
C=\frac{3|\Omega|}{4\varepsilon^2}+2\eta,
\qquad C_1=C_2=\eta,
\]
and use the two-component decomposition
\begin{equation}
E_{1,1}[\phi]=\int_\Omega \frac{(\phi^2-2)^2}{4\varepsilon^2}\,dx,
\qquad
E_{1,2}[\phi]=\int_\Omega \frac{\phi^2}{2\varepsilon^2}\,dx.
\label{eq:s6-ac-split}
\end{equation}
Then \(E_{1,\alpha}[\phi]+C_\alpha>0\) for \(\alpha=1,2\), so the decomposition is strictly admissible. Its variational derivatives satisfy
\[
\frac{\delta E_{1,1}}{\delta\phi}+\frac{\delta E_{1,2}}{\delta\phi}
=\frac{\phi^3-\phi}{\varepsilon^2},
\]
and
\[
\bigl(E_{1,1}[\phi]+C_1\bigr)
+\bigl(E_{1,2}[\phi]+C_2\bigr)
=
E_1[\phi]+C.
\]
Thus the single SAV auxiliary variable is defined from \(E_1+C\), while rank-$2$ PB-SAV and MSAV use the two strictly positive shifted components. The added shifts change neither the Allen--Cahn force nor the reference PDE.

The initial condition is $\phi^0(x,y) = 0.6\sin x\sin y$; ETDRK4 with $\Delta t_{\mathrm{ref}}=2.5\times10^{-8}$ gives the reference solution at $T=10^{-3}$, and the step-size ladder is $\Delta t\in\{10^{-5},\,5\times 10^{-6},\,2.5\times 10^{-6},\,1.25\times 10^{-6}\}$.

Figure~\ref{fig:ac-convergence}(a) shows the final $L^2$ error $\|\phi^N-\phi_{\mathrm{ref}}(T)\|_{L^2}$ at $T=10^{-3}$ for the three schemes. All three follow the $\mathcal O(\Delta t)$ reference slope, with SAV attaining the smallest constant and rank-$2$ PB-SAV and MSAV nearly coincident at this resolution. Figure~\ref{fig:ac-convergence}(b) plots the maximum auxiliary-variable error
\[
\max_{0\le n\le T/\Delta t}|q^n-Q(\phi^n)|
\]
on the same initial condition. Each curve follows the first-order reference slope; SAV and rank-$2$ PB-SAV are numerically coincident, and MSAV attains the same consistency order with its componentwise auxiliary variables. The two-component split~\eqref{eq:s6-ac-split} therefore does not reduce the trajectory error; rank-$2$ PB-SAV coincides with MSAV at a larger error constant than SAV, while the three schemes have the same first-order auxiliary-variable consistency.

\begin{figure}[htbp]
  \centering
  \begin{subfigure}[t]{0.48\textwidth}
    \centering
    \includegraphics[width=\textwidth]{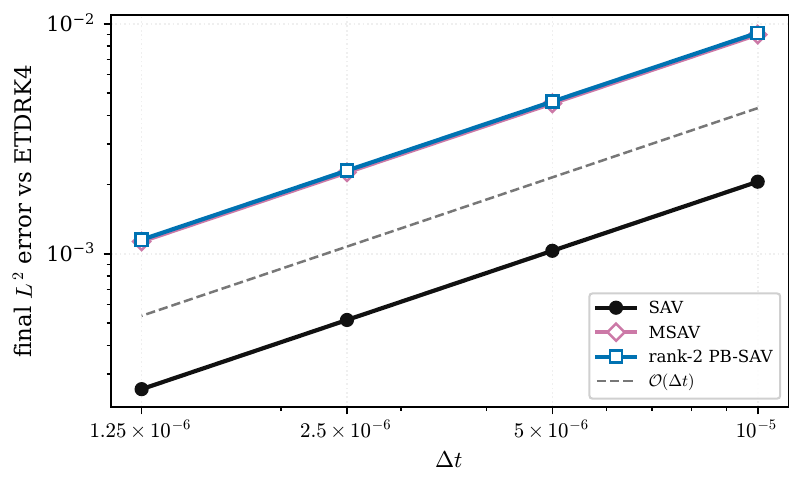}
    \caption{Final $L^2$ error vs $\Delta t$.}
  \end{subfigure}
  \hfill
  \begin{subfigure}[t]{0.48\textwidth}
    \centering
    \includegraphics[width=\textwidth]{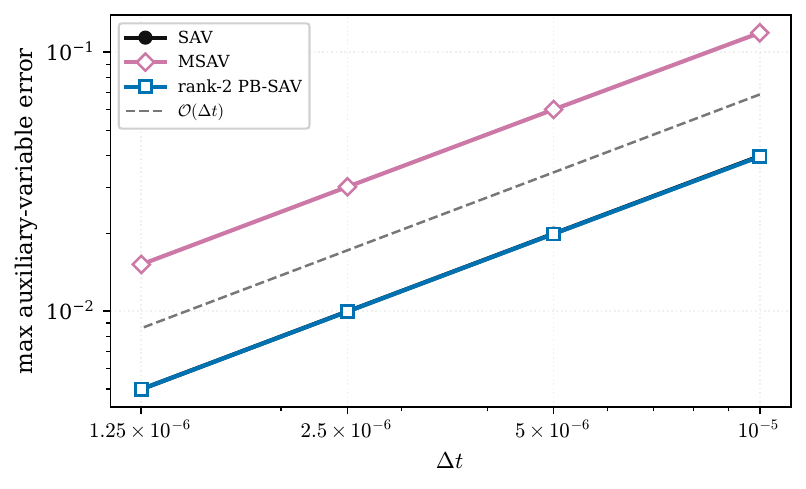}
    \caption{Maximum auxiliary-variable error vs $\Delta t$.}
  \end{subfigure}
  \caption{Convergence on the smooth Allen--Cahn initial condition $\phi^0=0.6\sin x\sin y$ at $T=10^{-3}$ for SAV, rank-$2$ PB-SAV, and MSAV, using \eqref{eq:s6-ac-split}. Panel (a) gives final $L^2$ errors; panel (b) gives $\max_n|q^n-Q(\phi^n)|$.}
  \label{fig:ac-convergence}
\end{figure}

\subsubsection{A Fourier-band residual split}
\label{subsubsec:exp-ac-residual-spectral}

The preceding split does not improve the trajectory error for the smooth initial condition.  We therefore use a different admissible split of the same Allen--Cahn energy, for which the correction reduces the error at the step sizes considered.

Write the double-well residual $\omega:=(\phi^2-1)/2$, so that $E_1[\phi]=\varepsilon^{-2}\int_\Omega \omega^2\,dx$. Let $\{\Pi_\alpha\}_{\alpha=1}^r$ be the orthogonal projections onto a partition of the Fourier grid into $r$ shells, chosen so that the initial residual energies $\varepsilon^{-2}\int_\Omega(\Pi_\alpha \omega)^2\,dx$ are approximately balanced across shells, and set
\[
\omega_\alpha:=\Pi_\alpha \omega,
\qquad
E_{1,\alpha}[\phi]:=\frac{1}{\varepsilon^2}\int_\Omega \omega_\alpha^2\,dx,
\qquad
U_\alpha=\frac{2}{\varepsilon^2}\,\phi\,\omega_\alpha.
\]
Since the $\Pi_\alpha$ are orthogonal and idempotent, $\sum_\alpha \omega_\alpha=\omega$, $\sum_\alpha E_{1,\alpha}=E_1$, and $\sum_\alpha U_\alpha=\varepsilon^{-2}\phi(\phi^2-1)$. 

We take the broadband initial condition $\phi^0=0.6\,(f-\bar f)/\|f-\bar f\|_{L^\infty}$ with
\[
\begin{aligned}
f(x,y)={}&0.45\sin x\sin y+0.25\cos(2x-y)+0.18\sin(4x+3y)\\
&+0.12\cos(7x-2y)+0.08\sin(10x+5y)
\end{aligned}
\]
on $[0,2\pi]^2$ (where $\bar f$ is the discrete mean), integrate to $T=6\times10^{-3}$ on a $128^2$ Fourier grid with $\varepsilon^2=9\times10^{-4}$, and measure the final $L^2$ error against an ETDRK4 reference at $\Delta t_{\mathrm{ref}}=2.5\times10^{-6}$.

Figure~\ref{fig:ac-residual-spectral} shows the phase fields and the error fields against the reference at the test step $\Delta t=7.5\times10^{-4}$. The SAV and rank-$2$ PB-SAV phase fields differ slightly from the reference, while the rank-$2$ error field is smaller than the SAV error field along the moving interfaces.

Quantitatively, the rank-$2$ correction reduces the final  $L^2$ error from $3.88\times10^{-2}$ for SAV to $2.55\times10^{-2}$, a ratio of $0.656$ (about a $34\%$ reduction); at $\Delta t=10^{-3}$ the ratio is $0.668$. Increasing the rank from $2$ to $4$ leaves the error unchanged at this precision, so rank $2$ accounts for the entire observed reduction in this example.

\begin{figure}[tbp]
  \centering
  \includegraphics[width=0.92\textwidth]{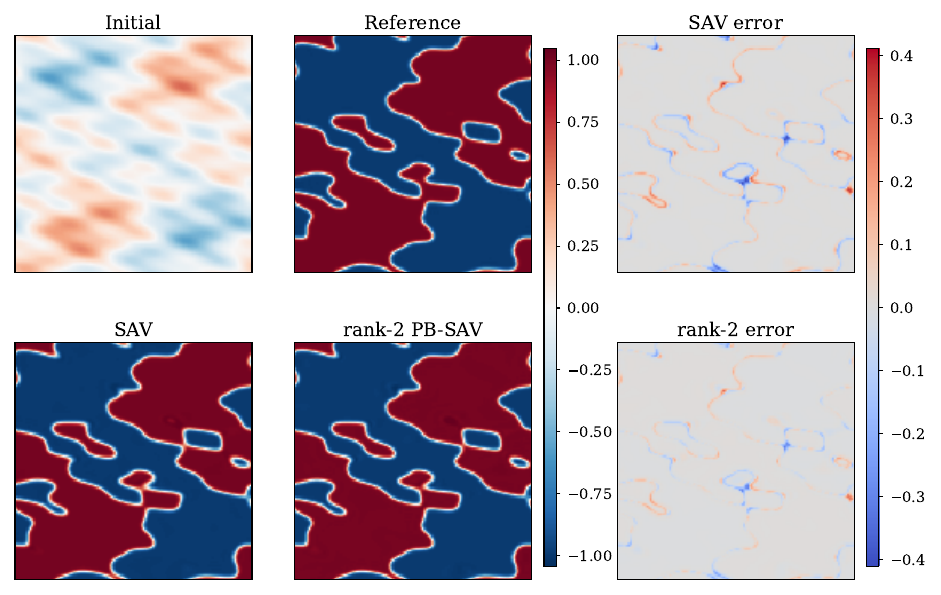}
  \caption{Fourier-band residual split for Allen--Cahn, broadband initial condition ($\varepsilon^2=9\times10^{-4}$, $T=6\times10^{-3}$, $\Delta t=7.5\times10^{-4}$, $128^2$ grid). The left four panels are the phase fields: the initial condition, the ETDRK4 reference, and the SAV and rank-$2$ PB-SAV final states. The right two panels are the error fields $\phi^N-\phi_{\mathrm{ref}}(T)$ for SAV and rank-$2$ PB-SAV.}
  \label{fig:ac-residual-spectral}
\end{figure}

\subsection{A finite-rank nonlocal model problem}
\label{subsec:exp-finite-rank-pde}

We next turn to a constructed finite-rank nonlocal model for which the component forces are prescribed.  We consider
\begin{equation}
\partial_t\phi
=
-\varepsilon^2(-\Delta)\phi
-\frac{\delta E_1}{\delta \phi},
\qquad
E[\phi]
=
\frac{\varepsilon^2}{2}\langle \phi,(-\Delta)\phi\rangle
+E_1[\phi],
\label{eq:s6-finite-rank-pde}
\end{equation}
with $\varepsilon^2=10^{-2}$, and
\begin{equation}
E_1[\phi]
=
\sum_{j=1}^{10} E_{1,j}[\phi],
\qquad
E_{1,j}[\phi]
=
\frac{\gamma_j}{4}
\left(\langle \phi,\psi_j\rangle^2-a_j^2\right)^2.
\label{eq:s6-finite-rank-energy}
\end{equation}

Here $\psi_j$ are normalized Fourier modes. In the displayed case, 
\[
\begin{aligned}
(\psi_j)_{j=1}^{10}
=&
\bigl(\cos x,\ \cos y,\ \cos(2x+y),\ \sin(2x-y),\ \cos(3x+2y),\\
&\quad
\sin(3x-y),\ \cos(4x+y),\ \sin(x+3y),\ \cos(5x+2y),\ \sin(2x+4y)\bigr),
\end{aligned}
\]
with normalization in the discrete $L^2$ inner product. We take
\(\gamma_j=9-\frac12(j-1)\) and \(a_j=1.25-0.05(j-1)\). The displayed initial condition is the balanced coefficient vector
\[
\phi^0
=
\sum_{j=1}^{10} c_j\psi_j,
\qquad
c=(0.90,-0.80,0.75,-0.70,0.65,-0.60,0.55,-0.50,0.45,-0.40).
\]

The computations use a Fourier spectral $64\times64$ grid. SAV uses the single combined energy \(E_1\), while PB-SAV uses the ten components in \eqref{eq:s6-finite-rank-energy} grouped into \(r\) consecutive blocks of nearly equal size, \(r=2,\ldots,10\); rank \(10\) resolves every component separately.

Figure~\ref{fig:finite-rank-rank-progression} reports the final $L^2$ distance to the ETDRK4 reference at \(T=0.5\). The trajectory error decreases as the correction rank is increased, with rank $10$ giving the smallest error in the displayed balanced sweep.

\begin{figure}[htbp]
  \centering
  \includegraphics[width=0.72\textwidth]{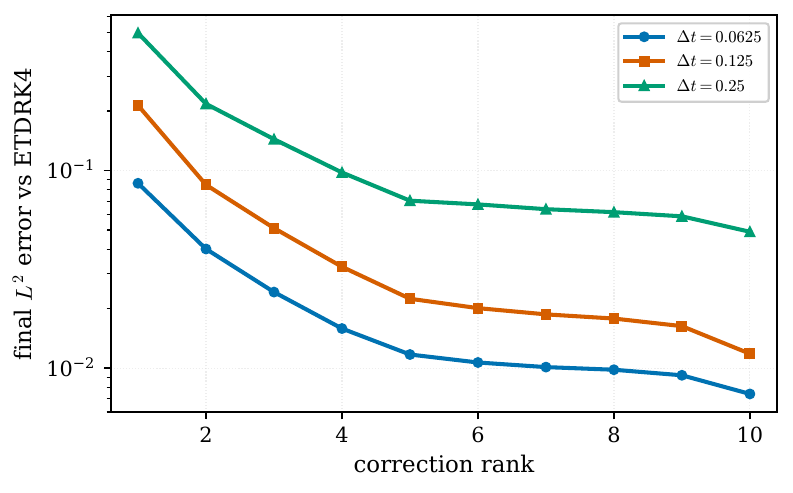}
  \caption{The finite-rank nonlocal model \eqref{eq:s6-finite-rank-pde}--\eqref{eq:s6-finite-rank-energy}: final $L^2$ distance to the ETDRK4 reference at $T=0.5$ as a function of correction rank. The displayed case uses the balanced initial condition.}
  \label{fig:finite-rank-rank-progression}
\end{figure}

The improvement over SAV persists in the sensitivity checks summarized in Table~\ref{tab:finite-rank-robustness}, which uses three coefficient sets for the same ten Fourier components: the \emph{balanced} case from Figure~\ref{fig:finite-rank-rank-progression},
\[
c_{\rm mixed}
=(0.95,-0.35,0.70,-0.55,0.40,-0.30,0.52,-0.44,0.33,-0.25),
\]
and
\[
c_{\rm low}
=(1.05,-0.85,0.45,-0.35,0.25,-0.20,0.15,-0.12,0.10,-0.08).
\]
Each cell reports the final $L^2$ distance of the rank-$10$ scheme divided by that of SAV, so values below one indicate that rank-$10$ PB-SAV is closer to the ETDRK4 reference. The smallest gains occur in the case in which the low modes dominate.

\begin{table}[tbp]
\centering
\small
\caption{Sensitivity checks for the ten-component finite-rank nonlocal model. Each cell reports \((\text{final }L^2\text{ distance of rank-}10\text{ PB-SAV})/(\text{final }L^2\text{ distance of SAV})\) to the same refined ETDRK4 reference. Final time \(T=0.5\).}
\label{tab:finite-rank-robustness}
\begin{tabular}{lccc}
\toprule
case & \(\Delta t=0.0625\) & \(\Delta t=0.125\) & \(\Delta t=0.25\) \\
\midrule
balanced & $0.086$ & $0.055$ & $0.099$ \\
mixed & $0.132$ & $0.015$ & $0.096$ \\
low modes dominant & $0.484$ & $0.275$ & $0.328$ \\
\bottomrule
\end{tabular}
\end{table}

\subsection{Linear nonlocal Cahn--Hilliard equation with exact spectral reference}
\label{subsec:exp-ch-nonlocal}

Our final test is a linear nonlocal Cahn--Hilliard gradient flow with an exact spectral reference solution.  We consider the energy
\begin{equation}
E[\phi]
=
\frac{\varepsilon^2}{2}\int_\Omega|\nabla\phi|^2\,dx
+
\frac{\sigma}{2}\langle\phi-\bar\phi,\mathcal K_\ell(\phi-\bar\phi)\rangle,
\qquad
\mathcal K_\ell=(I-\ell^2\Delta)^{-1},
\label{eq:s6-ch-energy}
\end{equation}
where $\bar\phi$ denotes the spatial mean of $\phi$.
The gradient flow is
\[
\partial_t\phi=\Delta\mu,\qquad \mu=\frac{\delta E}{\delta\phi}.
\]
The gradient term $\tfrac{\varepsilon^2}{2}\int|\nabla\phi|^2\,dx$ is the quadratic part, treated implicitly; the nonlocal term is $E_1[\phi]=(\sigma/2)\langle\phi-\bar\phi,\mathcal K_\ell(\phi-\bar\phi)\rangle$. We use $\varepsilon^2=5\times10^{-4}$, $\sigma=20$, $\ell=0.2$ on a $128\times128$ Fourier spectral grid.

The decomposition $E_1=\sum_{\alpha=1}^r E_{1,\alpha}$ partitions the Fourier shells into $r$ groups. The deterministic broadband initial condition is
\[
\begin{aligned}
\phi^0(x,y)
=\;&
0.16\cos x+0.14\cos(2y)+0.12\cos(4x+3y)+0.10\sin(7x-2y)\\
&+0.08\cos(10x+5y)+0.06\sin(14x-9y)+0.05\cos(18x+11y)\\
&+0.04\sin(21x-13y).
\end{aligned}
\]
The reference is the exact spectral solution of the linear PDE.

Figure~\ref{fig:ch-nonlocal-rank} shows the final $L^2$ error against the exact reference for ranks $r\in\{1,2,4,8\}$ after integration to $T=1$, with $\Delta t\in\{6.25\times10^{-4},1.25\times10^{-3},2.5\times10^{-3},5\times10^{-3}\}$. The pullback correction reduces the trajectory error for all four step sizes, with the largest gain at the coarser steps: at $\Delta t=5\times10^{-3}$ the rank-$1$ error of $1.26\times10^{-1}$ drops to $4.47\times10^{-2}$ at rank $4$ and $4.22\times10^{-2}$ at rank $8$.

\begin{figure}[tbp]
  \centering
  \includegraphics[width=0.68\textwidth]{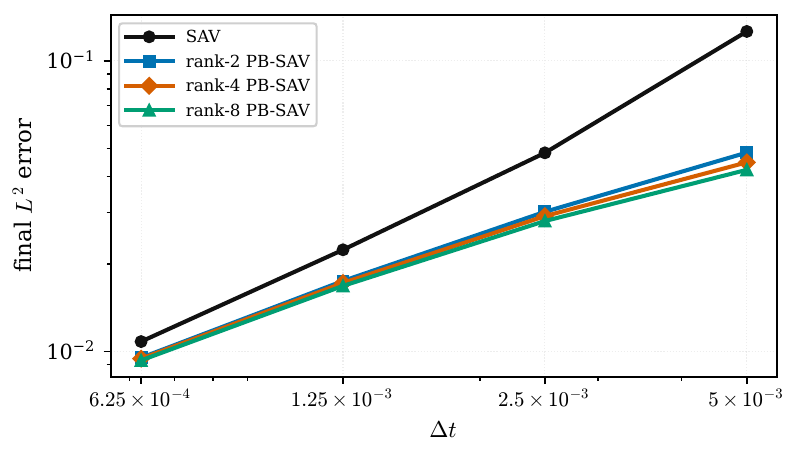}
  \caption{Final $L^2$ error for the linear nonlocal Cahn--Hilliard test at $T=1$, measured against the exact spectral reference.}
  \label{fig:ch-nonlocal-rank}
\end{figure}

\paragraph{Adaptive algorithm}
We next test whether the adaptive grouping rule of
Section~\ref{subsec:variance-guided-grouping} can retain the accuracy of the
fixed fine split while using fewer active components. Starting from a fine partition of
$E_1$ into eight initial-energy bands, we choose a grouped decomposition at each
step by Algorithm~\ref{alg:adaptive-grouping}, with direction $\psi^n$ the
rank-one SAV predictor increment and retention fraction $\tau=0.99$.

Figure~\ref{fig:ch-nonlocal-adaptive} shows that this rule nearly matches the
fixed rank-$8$ trajectory error across all four step sizes while using mean rank
between $2.04$ and $2.73$. At $\Delta t=5\times10^{-3}$ the active rank starts at
$4$, drops to $3$ during the initial transient, and settles at $2$ for most of the
run. 

\begin{figure}[htbp]
  \centering
  \begin{subfigure}[t]{0.48\textwidth}
    \centering
    \includegraphics[width=\textwidth]{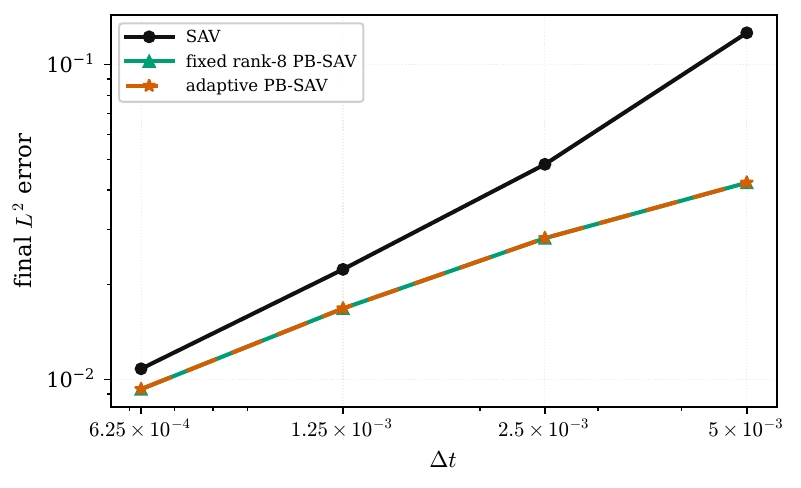}
    \caption{Final $L^2$ error.}
  \end{subfigure}
  \hfill
  \begin{subfigure}[t]{0.48\textwidth}
    \centering
    \includegraphics[width=\textwidth]{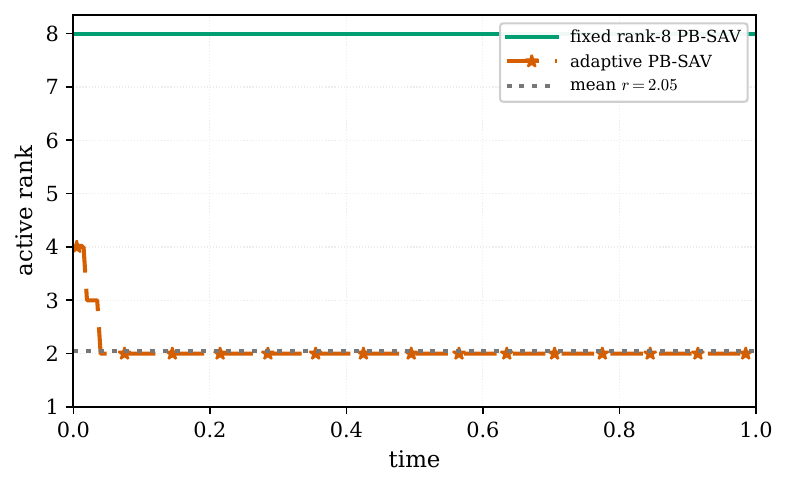}
    \caption{Active rank history.}
  \end{subfigure}
  \caption{Adaptive grouping for the linear nonlocal Cahn--Hilliard test. Panel
  (a) gives final $L^2$ errors at $T=1$ for SAV, fixed rank-$8$ PB-SAV, and the
  adaptive PB-SAV (Algorithm~\ref{alg:adaptive-grouping}) with $\tau=0.99$;
  panel (b) gives the active rank history at $\Delta t=5\times10^{-3}$.}
  \label{fig:ch-nonlocal-adaptive}
\end{figure}

\section{Conclusion}
\label{sec:conclusion}

We have introduced a pullback-corrected SAV (PB-SAV) family for gradient flows.  The method retains the single auxiliary variable of SAV but uses the pullback correction generated by an admissible decomposition of the nonlinear energy.

The SAV energy argument carries over unchanged.  PB-SAV satisfies a modified-energy dissipation law with the same scalar modified energy as SAV; the result also holds for step-dependent decompositions. For a fixed finite-dimensional spatial discretization, the scheme is first-order convergent under standard smoothness and uniform-positivity assumptions.  The correction produced by a refined decomposition dominates the coarser correction in the positive semidefinite order, and the gap is an explicit weighted variance.  This identity explains what refinement changes at the level of the linearized state equation, although the resulting trajectory accuracy still depends on the component forces.  The Sherman--Morrison--Woodbury formula gives an efficient low-rank implementation, and the finite-dimensional viewpoint identifies the correction with the Gauss--Newton matrix of a least-squares representation of the nonlinear energy.

The numerical experiments are consistent with this interpretation.  In some smooth regimes PB-SAV has the same first-order behavior as SAV and differs only in the error constant, which in the smooth Allen--Cahn test is larger than that of SAV.  In finite-dimensional problems with low-rank or grouped curvature, and in nonlocal examples where the component force is well represented by the chosen decomposition, the higher-rank correction substantially improves the trajectory accuracy. 

Future work will address higher-order PB-SAV schemes, systematic adaptive-rank selection, and sharper error estimates that relate the weighted-variance gap to the observed trajectory error.

\section*{Data Availability} Data sharing is not applicable to this article as no datasets were generated or analyzed during the current study.

\section*{Ethics Approval} This study did not involve any human or animal subjects. Therefore It does not require approval from an ethics committee.

\section*{Conflict of interest}
The authors declare that they have no competing interests.
\bibliographystyle{siamplain}
\bibliography{zhang_bibtex}
\end{document}